\definecolor{shadecolor}{rgb}{0.88,0.91,0.95}       
\newcommand{\Z}{\mathbb{Z}}
\newcommand{\R}{\mathbb{R}}
\newcommand{\C}{\mathbb{C}}
\newcommand{\F}{\mathbb{F}}
\renewcommand{\P}{\mathbb{P}}
\newcommand{\cO}{\mathcal{O}}
\newcommand{\Ker}{\operatorname{Ker}}
\renewcommand{\Im}{\operatorname{Im}}
\newcommand{\Hom}{\operatorname{Hom}}
\newcommand{\Fix}{\operatorname{Fix}}
\newcommand{\Aut}{\operatorname{Aut}}
\newcommand{\rank}{\operatorname{rank}}
\renewcommand{\O}{\operatorname{O}}
\newcommand{\Id}{\operatorname{Id}}
\newcommand{\<}{\langle}
\renewcommand{\>}{\rangle}
\newcommand{\tensor}{\otimes}
\newcommand{\Pic}{\operatorname{Pic}}
\renewcommand{\d}{\operatorname{d} \!}
\newcommand\Item[1][]{%
  \ifx\relax#1\relax  \item \else \item[#1] \fi
  \abovedisplayskip=0pt\abovedisplayshortskip=0pt~\vspace*{-\baselineskip}}
\numberwithin{equation}{section}
\newtheorem{proposition}[equation]{Proposition}
\crefname{proposition}{Proposition}{Propositions}
\newtheorem{lemma}[equation]{Lemma}
\crefname{lemma}{Lemma}{Lemmas}
\crefname{corollary}{Corollary}{Corollaries}
\newtheorem*{corollary*}{Corollary}
\crefname{corollary*}{Corollary}{Corollaries}
\newtheorem{theorem}[equation]{Theorem}
\crefname{theorem}{Theorem}{Theorems}
\crefname{conjecture}{Conjecture}{Conjectures}
\newtheorem*{theorem*}{Theorem}
\crefname{theorem*}{Theorem}{Theorems}
\crefname{claim}{Claim}{Claims}
\theoremstyle{remark}
\crefname{question}{Question}{Questions}
\newtheorem{definition}[equation]{Definition}
\crefname{definition}{Definition}{Definitions}
\newtheorem{example}[equation]{Example}
\crefname{example}{Example}{Examples}
\newtheorem{remark}[equation]{Remark}
\crefname{remark}{Remark}{Remarks}
\crefname{assumption}{Assumption}{Assumptions}
\author{Dino Festi, Daniel Platt, Ragini Singhal, and Yuuji Tanaka}
\date{\today}
\title{Examples of real stable bundles on K3 surfaces}
\newcommand{\Cl}{\operatorname{Cl}}
\def\O{\mathcal{O}}
\pgfplotsset{compat=1.15}
\begin{document}

\maketitle

\begin{abstract}
    Motivated by gauge theory on manifolds with exceptional holonomy, we construct examples of stable bundles on K3 surfaces that are invariant under two involutions: one is holomorphic; and the other is anti-holomorphic.
    These bundles are obtained via the monad construction, and stability is examined using the Generalised Hoppe Criterion of Jardim--Menet--Prata--S\'{a} Earp, which requires verifying an arithmetic condition for elements in the Picard group of the surfaces.
    We establish this by using computer aid in two critical steps:  
    first, we construct K3 surfaces with small Picard group---one branched double cover of $\mathbb{P}^1 \times \mathbb{P}^1$ with Picard rank $2$ using a new method which may be of independent interest; and 
    second, we verify the arithmetic condition for carefully chosen elements of the Picard group, which provides a systematic approach for constructing further examples.
\end{abstract}

\tableofcontents

\section{Introduction}

Stable vector bundles in algebraic geometry give rise to Hermite--Einstein connections, which are analytic objects that are challenging to construct within differential geometry. This connection is established through the Hitchin–-Kobayashi correspondence, as proven by Donaldson \cite{Donaldson1985, Donaldson1987} and Uhlenbeck--Yau \cite{Uhlenbeck-Yau1989}.
These Hermite--Einstein connections are also important in physics, see for example \cite{Strominger1986}.

In applications, it is sometimes the case that the underlying manifold admits discrete symmetries, and one looks for bundles to which these symmetries can be lifted.
The simplest case is that of involutions.
On a complex manifold, one may, in particular, consider anti-holomorphic and holomorphic involutions.
The former was studied in \cite{Wang1993}.

Stable bundles are also key building blocks for the construction of $Spin(7)$-instantons on compact $Spin(7)$-manifolds in \cite{Tanaka2012} and $G_2$-instantons on compact $G_2$-manifolds in \cite{Earp2015,Jardim2017,Platt2024}, due to the correspondence mentioned above.

Since our motivation for this paper comes from the construction of $G_2$-instantons in \cite{Platt2024} by the second-named author, we briefly review it below:

If $X$ is a K3 surface with holomorphic involution $\iota: X \rightarrow X$ and anti-holomorphic involution $\sigma: X \rightarrow X$, then one may extend them to maps $\iota', \sigma': T^3 \times X \rightarrow T^3 \times X$ in such a way that there exists a resolution of singularities $N^7 \rightarrow (T^3 \times X)/\< \iota', \sigma' \>$, such that $N$ admits 
a Ricci-flat metric.
More precisely, $N$ admits a torsion-free $G_2$ structure.
The manifold $N$ was constructed in \cite{Joyce2017}.
In \cite[Section 5.2]{Platt2024}, one example of a $G_2$-instanton on $N$ was constructed as follows:

\begin{theorem}[\cite{Platt2024}]
If $E \rightarrow X$ is a stable bundle that is infinitesimally rigid and admits a holomorphic involution $\hat{\iota}: E \rightarrow E$ covering $\iota$ and an anti-holomorphic involution $\hat{\sigma}: E \rightarrow E$ covering $\sigma$, then there exists a bundle $E' \rightarrow N^7$ constructed from $E$ that admits a $G_2$-instanton.
\end{theorem}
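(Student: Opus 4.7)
The strategy is to produce a $G_2$-instanton on the resolution $N$ by first putting a natural connection on a bundle over the flat orbifold $(T^3 \times X)/\langle \iota', \sigma' \rangle$, transferring it to $N$ as an approximate solution, and then perturbing it to an honest instanton using infinitesimal rigidity.

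\textbf{Step 1: The canonical connection on $E$.}
By the Hitchin--Kobayashi correspondence (Donaldson, Uhlenbeck--Yau), the stable bundle $E \to X$ carries a unique (up to unitary gauge) Hermite--Einstein connection $A$. Since $\hat{\iota}^{*}A$ is again Hermite--Einstein with respect to the K\"ahler form $\iota^{*}\omega_X$ and $\hat{\iota}$ is holomorphic, uniqueness shows $\hat{\iota}^{*}A$ is gauge-equivalent to $A$, and an analogous argument handles $\hat{\sigma}$. Using the fact that $\langle \hat{\iota}, \hat{\sigma} \rangle$ is a finite group together with averaging (or a standard fixed-point trick), one selects a gauge-representative of $A$ that is preserved by both involutions simultaneously.

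\textbf{Step 2: Descent to the orbifold.}
Pull back $(E,A)$ along the projection $T^3 \times X \to X$ and tensor with the product (trivial) connection on the trivial bundle over $T^3$. For the product $G_2$-structure on $T^3 \times X$, the $G_2$-instanton condition $F_{A'} \wedge \psi = 0$ reduces to the Hermite--Einstein equation on the $X$ factor plus flatness along $T^3$, so the pulled-back connection is an honest $G_2$-instanton on $T^3 \times X$. By construction it is $\langle \iota', \sigma' \rangle$-equivariant and thus descends to an orbifold $G_2$-instanton on $(T^3 \times X)/\langle \iota', \sigma' \rangle$.

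\textbf{Step 3: Approximate $G_2$-instanton on $N$.}
Near each component of the singular set one glues in a model bundle with a model instanton on the local resolution (e.g.\ an ASD instanton on an Eguchi--Hanson-type piece, pulled back by the projection to $T^3$), matched with the descended bundle away from the singular locus via cut-off functions. Because Joyce's torsion-free $G_2$-structure on $N$ agrees up to small error with the flat orbifold $G_2$-structure away from the gluing region, the glued bundle $E'$ supports a connection $A'$ satisfying $\|F_{A'} \wedge \psi\|_{L^p}$ small in suitable weighted norms.

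\textbf{Step 4: Perturbation.}
Solve the full $G_2$-instanton equation by an implicit function theorem applied to the non-linear map $a \mapsto \pi_7(F_{A' + a})$, looking for $a$ in a weighted Sobolev space. This needs uniform invertibility of the linearisation with a bounded inverse; the infinitesimal rigidity hypothesis (that $H^1$ of the deformation complex of $(E,A)$ vanishes, with $H^0$ and $H^2$ controlled by stability) is exactly what provides the required Fredholm and vanishing estimates on the K3 factor, while the flat $T^3$ directions and the local instanton model contribute kernels that decay after gluing.

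\textbf{Main obstacle.} The hardest step is Step 4: obtaining the uniform estimate on the inverse of the linearised operator as the neck parameter in Joyce's construction shrinks. One must carefully combine the vanishing $H^1$ from infinitesimal rigidity on $X$ with matching vanishing of obstructions on the local model, in weighted spaces adapted to the gluing region, so that the implicit function theorem applies with quadratic error dominated by the approximate error from Step 3.
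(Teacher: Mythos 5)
This theorem is not proved in the paper at all: it is imported verbatim from \cite{Platt2024}, and the authors explicitly decline to discuss the construction beyond noting that it proceeds by gluing. So there is no in-paper argument to compare against; the relevant comparison is with the cited reference, and your outline does reproduce the strategy used there (and in Walpuski's earlier work on resolutions of flat $G_2$-orbifolds): equivariant Hermite--Einstein connection via Hitchin--Kobayashi and uniqueness, pullback to the product $G_2$-structure on $T^3\times X$, descent to the orbifold, gluing of local models over the resolved singular set, and a perturbation argument whose invertibility input is exactly the infinitesimal rigidity of $E$.

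That said, what you have written is a plan, not a proof, and two points deserve flagging. First, in Step 2 the pulled-back Hermite--Einstein connection is a $G_2$-instanton only if its slope vanishes; the bundles in this paper have $c_1(E)\neq 0$, so one must pass to the trace-free part of the curvature (equivalently, work with the induced $\mathrm{PU}(n)$- or $\End_0(E)$-connection), and the equivariance of Step 1 must be arranged at that level --- also note that for the anti-holomorphic lift $\hat\sigma$ the pullback $\hat\sigma^*A$ lives on the conjugate bundle and satisfies the Hermite--Einstein equation with the sign of the slope reversed, so the uniqueness argument needs this to be spelled out. Second, Step 4 is where all of the content of \cite{Platt2024} lives: the uniform bound on the inverse of the linearisation as the gluing parameter shrinks is a genuinely delicate weighted-space estimate, and naming the difficulty is not the same as resolving it. As a reconstruction of the architecture of the cited proof your proposal is accurate; as a standalone proof it is incomplete at exactly the step you identify as the main obstacle.
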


We do not discuss the details here, but the differential geometric construction above is very explicit by means of \emph{gluing}, and many properties of $E'$ and its instanton can be computed from data of $E$ in a straightforward way.

The assumption of \emph{infinitesimal rigidity} is very restrictive.
In other similar situations, it turned out to be possible to remove this assumption, and instead work with a \emph{Kuranishi map}.
The same is hoped to work in this case, and because of this, constructing non-rigid stable bundles is also of interest.

The goal of this paper is to give explicit examples of stable bundles which admit these involutions.
The methods to construct these bundles are known and we prove no new theorems about these objects.
However, our examples are new, and for many of them we provide the easy-to-use computer code that we wrote to find these examples.
We construct the following completely explicit examples using the \emph{monad construction} for holomorphic vector bundles:

\begin{theorem}
    \label{theorem:main-theorem}
    All K3 surfaces $X$ in the following three items admit one holomorphic involution $\iota: X \rightarrow X$ and one anti-holomorphic involution $\sigma: X \rightarrow X$.
    The bundles $K, K_s$ and $K'$ in the following three items admit a holomorphic involution $\hat{\iota}$ covering $\iota$ and admit an anti-holomorphic involution $\hat{\sigma}$ covering $\sigma$.
    \begin{enumerate}
        \item 
        There exists a K3 surface $X$ that is a branched double cover of $\P^2$ with projection map $\pi:X \rightarrow \P^2$ such that the following is true:
        every infinitesimally rigid stable vector bundle of rank $2$ is isomorphic to $\pi^*(T^* \P^2)$ twisted by a line bundle.

        Furthermore, for $s \in \Z, s>0,$ there exists a stable bundle $K_s \rightarrow X$ of rank $2$ with $c_1(K_s)=c_1(\pi^* \mathcal{O}_{\P^2}(-s))$ and $c_2(K_s)=2s^2$, which is infinitesimally rigid for $s=1$.

        \item 
        There exists a K3 surface $X$ that is a branched double cover of  $ \ \P^1 \times \P^1$ with projection map $\pi:X \rightarrow \P^1 \times \P^1$ such that the following is true:
        there exist stable bundles $K,K' \rightarrow X$ of rank $3$ with $c_1(K)=\pi^* \mathcal{O}_{\P^1 \times \P^1}(-4,-4)$ and $c_2(K)=24$ and $c_1(K')=(-4,-4)$ and $c_2(K')=16$ and $K$ is infinitesimally rigid.

        \item 
        There exists a quartic K3 surface $X \subset \P^3$ and a stable vector bundle $E \rightarrow X$ of rank $2$ such that $c_1(E)=\mathcal{O}_{\P^3}(-3)|_X$ and $c_2(E)=12$.
        The bundle $E$ admits a lift $\hat{\sigma}$ of $\sigma$, and $K=E \oplus \iota^* E$ admits lifts $\hat{\iota}$ and $\hat{\sigma}$ of $\iota$ and $\sigma$ respectively.
    \end{enumerate}
\end{theorem}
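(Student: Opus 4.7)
The strategy for all three items is parallel: (i) construct $X$ by an explicit equation carrying both involutions, with $\iota$ coming from a polynomial symmetry and $\sigma$ from complex conjugation of coefficients chosen to be real; (ii) compute $\Pic(X)$ explicitly enough to use the Generalised Hoppe Criterion; (iii) present each bundle as the cohomology of a monad whose terms are equivariant sums of line bundles, so that the involutions lift tautologically; (iv) read off Chern classes from the monad; (v) prove stability by reducing to an arithmetic condition on $\Pic(X)$; and (vi) address rigidity and the classification statement of item (1) by Mukai-lattice arguments.

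For item (1) I would take $X$ as the double cover of $\P^2$ branched along a generic real sextic, so that $\iota$ is the covering involution; for item (2) a double cover of $\P^1 \times \P^1$ branched along a real $(4,4)$-curve; and for item (3) a suitable real quartic in $\P^3$, where $\iota$ arises from a coordinate symmetry such as $(x_0 : x_1 : x_2 : x_3) \mapsto (x_0 : x_1 : -x_2 : -x_3)$. In each case $\sigma$ is complex conjugation. The Picard-group computation is the first serious step: I would use van Luijk's method, reducing modulo two primes, computing the characteristic polynomial of Frobenius on $H^2_{\text{\'et}}$ by point counting, and exhibiting enough explicit divisors to match the upper bound coming from the Tate conjecture. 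For item (2), where the rank must be exactly $2$ and the intersection form must be specified, this is where the abstract's new method enters and where computer aid is essential. All branch loci are chosen invariant under both $\iota$ and $\sigma$.

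With $\Pic(X)$ under control, each of $K_s$, $K$, $K'$, $E$ would be defined as the middle cohomology of a monad $0 \to A \to B \to C \to 0$ in which $A,B,C$ are sums of pullbacks of line bundles from the ambient space. The monad differentials are chosen with real coefficients and equivariantly under the polynomial symmetry, so $\iota$ and $\sigma$ lift to $B$ and intertwine with the differentials, descending to $\hat{\iota}, \hat{\sigma}$ on the cohomology. Chern classes follow from $\operatorname{ch}(E) = \operatorname{ch}(B) - \operatorname{ch}(A) - \operatorname{ch}(C)$. Stability via the Generalised Hoppe Criterion of Jardim--Menet--Prata--S\'{a} Earp is then a finite check: for each potential destabilising line bundle $L$ of the appropriate slope, verify $H^0(E \otimes L^\vee) = 0$ (and analogously for $E^\vee$ and for relevant exterior powers). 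Writing $L$ in terms of generators of $\Pic(X)$ and of the line bundles appearing in $A,B,C$, this reduces to a family of inequalities in the intersection form of $\Pic(X)$, which we verify by computer. For item (3), where $E$ need not be $\iota$-invariant, the construction $K = E \oplus \iota^* E$ carries the obvious swap $\hat{\iota}$ and the diagonal $\hat{\sigma}$.

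Infinitesimal rigidity of $K_1$ and $K$ in items (1) and (2) reduces to showing $\operatorname{Ext}^1(E,E) = 0$, which I would compute from the monad via the associated long exact sequences in $\operatorname{Ext}$. The classification in item (1)---that every infinitesimally rigid rank-$2$ stable bundle is $\pi^*(T^*\P^2)$ twisted by a line bundle---would follow from Mukai's classification of simple rigid sheaves on K3 surfaces: on a Picard rank $1$ K3 the Mukai lattice is effectively two-dimensional, and its $(-2)$-classes of the specified rank are enumerated by hand, each corresponding to a unique isomorphism class. The main obstacle, as flagged in the abstract, will be controlling the Picard group of the double cover in item (2)---pinning down rank exactly $2$ with the needed intersection form by a new arithmetic-geometric argument---and organising the Hoppe-criterion check so that it is finite and tractable on a computer; the remaining steps are comparatively mechanical once these are in place.
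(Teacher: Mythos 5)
Your overall architecture matches the paper's: real branch loci give $\sigma$, the deck transformation gives $\iota$, the bundles are (pullbacks of) monad cohomology with real equivariant differentials, Picard groups are pinned down by point counting mod $p$, and stability goes through the Generalised Hoppe Criterion. The classification in item (1) is also the same argument in different clothes: the paper solves $4c_2-c_1^2-6=0$ (the $(-2)$-Mukai-vector condition for rank $2$) and invokes the single-point-moduli theorem, which is your Mukai-lattice enumeration. Two genuine differences of route: the paper checks stability \emph{downstairs} on $\P^2$ or $\P^1\times\P^1$ and transfers it to $X$ via a pullback lemma (valid because $\pi^*:\Pic Y\to\Pic X$ is an isomorphism), rather than running Hoppe directly on $X$ as you propose---these are equivalent, but the downstairs computation is what the computer code actually does; and rigidity is obtained from expected dimension zero plus smoothness of moduli on a K3 (so $\dim\operatorname{Ext}^1(E,E)$ equals the actual dimension, which is $0$), which is cleaner than your direct $\operatorname{Ext}^1$ chase through the monad, though the latter would also work.

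The one genuine gap is in your stability step. You write that verifying $H^0(E\otimes L^\vee)=0$ for all destabilising $L$ ``reduces to a family of inequalities in the intersection form of $\Pic(X)$, which we verify by computer,'' and you flag making the check ``finite and tractable'' as an obstacle without resolving it. But the Hoppe condition is not arithmetic: the intersection form only tells you \emph{which} $L$ must be tested, and that set is infinite (all $L=\mathcal{O}(k,l)$ with $k+l$ below a fixed bound, say). No computer can check infinitely many cohomology groups, so an extra idea is required. The paper supplies it: compute the finitely many borderline twists with \texttt{Macaulay2}, and for the infinite remainder---where $k\leq -1$ or $l\leq -1$---restrict to a ruling $\P^1\times\{z\}$ and use $h^0(E(k,l))\leq h^0(E|_{\P^1\times\{z\}}\otimes\mathcal{O}(k))\leq h^0(E|_{\P^1\times\{z\}}\otimes\mathcal{O}(-1))=0$, i.e.\ monotonicity of $h^0$ under negative twisting on a rational curve. (On the quartic the analogous step is that the relevant line bundle is not effective, so the vanishing comes for free from the defining exact sequence.) Without this restriction argument, or something equivalent, your proof of stability does not close.
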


The bundles from the first, second, and third items of the theorem are constructed in  \cref{subsection:bundles-on-cp2}, \cref{subsection:P1xP1-examples}, and \cref{subsection:example-on-quartic}, respectively.

The construction of all our examples relies on the following:
we focus on K3 surfaces with small Picard groups, i.e., K3 surfaces that admit not many line bundles.
We find suitable examples using \texttt{Magma}.
Roughly speaking, checking stability of bundles becomes computationally easy, because in these cases there are not many line bundles that can destabilise a given bundle.
This is made rigorous by the Generalised Hoppe Criterion \cite[Theorem 3]{Jardim2017}, that we employ.
We use \texttt{Macaulay2} to check a numerical condition on finitely many carefully chosen line bundles, and can then check this condition by hand on all remaining line bundles, and are able to conclude stability of our constructed bundles in this way.
All computer code used for the examples in the paper can be found at \url{https://github.com/raginisinghalmath/Code-for-stable-bundles}.

\medskip 
\noindent
\textbf{Acknowledgments.}
The authors thank Simon Donaldson, Marcos Jardim, Wim Nijgh, Richard Thomas, and Luya Wang for helpful conversations.
They also thank the anonymous referee for suggesting a proof without computer assistance for \cref{proposition:K-on-P1xP1-stable}.
D.F. is a member of the UMI and the INdAM group GNSAGA; he is grateful to 
P{\i}nar K{\i}l{\i}\c{c}er and Bert van Geemen for their help with computational power. 
D.P. was partially supported by the Simons Collaboration on Special Holonomy in Geometry, Analysis and Physics and partially supported by the Eric and Wendy Schmidt AI in Science Postdoctoral Fellowship, funded by Schmidt Sciences. 
R.S. is funded by the Deutsche Forschungsgemeinschaft (DFG, German Research Foundation) under Germany's Excellence Strategy EXC 2044 –390685587, Mathematics Münster: Dynamics–Geometry–Structure.
Y.T. was partially supported by JSPS Grant-in-Aid for Scientific Research numbers 20H00114, 21H00973, 21K03246, 23H01073, Startup Grant at BIMSA, and the Beijing NSF Beijing International
Scientist Project IS25031. 
Y.T. thanks the Department of Mathematics in Kyoto University
and Beijing Institute of Mathematical Sciences and Applications (BIMSA) for their hospitality. 

\section{Background}

\subsection{K3 surfaces}
In this subsection, we introduce K3 surfaces and the basic notions that will be used later;
we refer to \cite[Chapter 1]{Huybrechts2016} for proofs and much more information.
In this paper, we will only consider K3 surfaces defined over $\C$ and will furthermore only deal with \emph{projective} ones, that is, K3 surfaces that admit an embedding into a projective space.
In other words, our K3 surfaces will always be projective varieties of dimension $2$.

We define a K3 surface as a smooth surface $X$ with trivial canonical divisor $K_X \cong \mathcal{O}_{X}$ and trivial first cohomology group of the structure sheaf $H^1(X,\cO_X)=0$.
We define: 
\begin{align*}
    \text{the \emph{Picard group}:}
    \quad
    \Pic X
    &=
    \{ \text{invertible sheaves on } X \} / \cong,
    \text{ and}
    \\
    \text{the \emph{divisor class group}:}
    \quad
    \Cl X
    &=
    \{ \text{Weil divisors on $X$} \} / \approx,
\end{align*}
where $\cong$ denotes isomorphism of line bundles and $\approx$ denotes linear equivalence of divisors.
On a smooth complex K3 surface, they coincide, namely, there is a natural isomorphism $\Cl X \cong \Pic X$, given by sending a divisor class $[D] \in \Cl X$ to its corresponding line bundle $\mathcal{O}_X (D) \in \Pic X$, see e.g. \cite[Corollary II.6.16]{Hartshorne2013}.
In what follows, with a slight abuse of notation, we will freely switch among divisors, divisor classes and corresponding line bundles.
The intersection pairing of the divisor group induces a pairing on $\Cl X$.
With this pairing, also called intersection pairing, $\Cl X$ turns out to be an even lattice, i.e., a finitely generated, torsion-free abelian group with a non-degenerate pairing $\Cl X \times \Cl X \to \Z$ such that, for every $D\in \Cl X$ the self-intersection $D^2=D.D$ is an even integer.
The rank of $\Pic X$ is called \emph{the Picard number} of $X$, and it is denoted by $\rho (X)$.
The Hodge-index theorem also shows that the $\Cl X$ is a hyperbolic lattice, that is, its signature is $(1, \rho(X)-1)$.

\begin{definition}\label{r:Gram}
    Choosing a basis for a lattice $N$, we can associate a \emph{Gram matrix} to the bilinear pairing. 
    We introduce the notation~$[a\; b\; c]$ with~$a,b,c\in \Z$ for a lattice of rank 2 and a basis with Gram matrix equal to $\begin{pmatrix}
        a & b\\
        b & c
    \end{pmatrix}$.
    We also introduce the notation $\langle a \rangle$ for a lattice of rank 1 and a basis with Gram matrix equal to $\begin{pmatrix}
        a
    \end{pmatrix}$.
\end{definition}

We denote by $U$ the lattice $[0\; 1\; 0]$.
We denote by $E_8(-1)$ is the unique even unimodular negative-definite lattice of rank $8$.

The second cohomology group $H^2(X,\Z)$, endowed with the cup product, is also a lattice. 
In particular, if $X$ is a K3 surface, then $H^2(X,\Z)$ is isometric to the lattice $U^{\oplus 3} \oplus E_8(-1)^{\oplus 2}$, 
which is called \emph{the K3 lattice} and we denote it by $\Lambda_{K3}$.
The exponential sequence for $X$ shows that $\Pic X$ embeds into $H^2(X,\Z)$ and so it follows that $1\leq \rho (X)\leq 22$.
In fact, as $X$ is a K3 surface over $\C$, more is true: 
$H^2(X,\C)=H^2(X,\Z)\otimes \C$ is endowed with a Hodge structure of weight two,
$H^2(X,\C)=H^{2,0}(X)\oplus H^{1,1}(X)\oplus H^{0,2}(X)$ with $\dim H^{1,1}(X)=20$. 
From the Lefschetz theorem on $(1,1)$ classes it follows that $\Pic X=H^{1,1}(X)\cap H^2(X,\Z)$ and hence $1\leq \rho (X)\leq 20$.

If  $L \in \Pic X$ is an ample line bundle then the pair $(X,L)$ is called a \emph{polarised} K3 surface;
the degree of $(X,L)$ is defined as the (even) integer $L^2\in 2\Z$.

\begin{example}
    The double cover $X$ of $\P^2$ ramified above a smooth sextic is a K3 surface.
    The pull-back $L$ of $\mathcal{O}_{\P^2}(1)$
     is an ample line bundle and its self-intersection is $2$, hence $(X, L)$ is a K3 surface of degree $2$, see~\cref{p:DoublePlaneK3}.
\end{example}

\subsection{Moduli of semistable sheaves}
\label{subsection:moudli-of-ss-sheaves}

In this subsection, we recall some facts about semistable and stable sheaves (see e.g. \cite{Huybrechts2010} for the details).
To this end, let $X$ be a complex projective surface, and let $H$ be an ample divisor of $X$. 

Let $\mathcal{E} \rightarrow X$ be a coherent sheaf. 
We define the \emph{degree} of $\mathcal{E}$ by $\deg_H(\mathcal{E}) := c_1(\mathcal{E}) \cdot [H]$. 
We call $\mu (\mathcal{E}) := \frac{\deg_{H} (\mathcal{E})}{ \text{rank} (\mathcal{E})}$ the \emph{slope} of $\mathcal{E}$.  

\begin{definition} A torsion-free coherent sheaf $\mathcal{E}$ on $X$ is called \emph{$\mu$-semistable}, if for any proper subsheaf $\mathcal{F}$ of $\mathcal{E}$ the following holds:
\[
\mu (\mathcal{F})  \leq \mu (\mathcal{E}).
\]
We say that $\mathcal{E}$ is \emph{$\mu$-stable} if the strict inequality holds in the above.
\end{definition}

We denote by $\mathcal{O}_X(1)$ the ample line bundle on $X$ and by $p_{\mathcal{E}} (n) := \chi (\mathcal{E} \otimes \mathcal{O}_X (n))$ the {\it Hilbert polynomial} of a coherent sheaf $\mathcal{E}$ on $X$, where $\displaystyle{\chi (\mathcal{E}) := \sum_{i=1}^{2} (-1)^{i} \dim H^{i} (X, \mathcal{E} )}$ is the Euler characteristic of a coherent sheaf $\mathcal{E}$ on $X$. 

\begin{definition} A torsion-free sheaf $\mathcal{E}$ on $X$ is called {\it Gieseker-semistable} if for any proper subsheaf $\mathcal{F}$ the following holds:
$$ \frac{p_{\mathcal{F}}(n)}{\rank \mathcal{F}}  \leq \frac{p_{\mathcal{E}}(n)}{\rank \mathcal{E}} $$
for sufficiently large $n \in \mathbb{N}$. 
We say it is {\it Gieseker-stable}, if the strict inequality holds for sufficiently large $n \in \mathbb{N}$ in the above. 
\end{definition}

Note that the following holds (see e.g. \cite[Lemma 1.2.13]{Huybrechts2010})
$$ \mu\text{-stable} \Rightarrow \text{Gieseker-stable} \Rightarrow \text{Gieseker-semistable} \Rightarrow \mu\text{-semistable}  .$$
Note also that all $\mu$-semistable bundles are $\mu$-stable bundles, if the rank and degree are coprime (see e.g. \cite[Lemma 1.2.14]{Huybrechts2010}. 

We denote by $\mathcal{M}_{X}^{ss} (r, c_1, c_2)$ the moduli space of Gieseker-semistable sheaves $\mathcal{E}$ (resp. $\mathcal{M}_{X}^{s} (r, c_1, c_2)$ the moduli space of Gieseker-stable sheaves) of rank $r$ with $c_1(\mathcal{E}) =c_1$ and $c_2(\mathcal{E}) =c_2$ on a complex projective surface $X$.  
The number
\[
\text{exp.dim}(r,c_1,c_2)
:=
\Delta - (r^2 -1) \chi (\mathcal{O}_{X}),
\]
where $\Delta := 2 r c_2 -(r-1) c_1^2$ is the discriminant, and $\chi(\mathcal{O}_X) = 1 -h^{0,1} +h^{0,2}$, is called the \emph{expected dimension} of the moduli space $\mathcal{M}_{X}^{ss} (r, c_1, c_2)$.

When $X$ is a K3 surface, the moduli space $\mathcal{M}_{X}^{s} (r, c_1 ,c_2)$ is smooth and the dimension is always even, see \cite[p.168]{Huybrechts2010}.
In the case $r=2$ we have that $\dim \mathcal{M}_{X}^{ss} (2, c_1, c_2)=\text{exp.dim}(2,c_1,c_2)=4c_2-c_1^2-6$, see \cite[p.156]{Huybrechts2010}.
For arbitrary rank, Mukai completely described moduli spaces of dimension $\leq 2$ on K3 surfaces, see \cite{Mukai1987}.
In particular, we have the following for zero-dimensional moduli spaces:

\begin{theorem}[Theorem 6.1.6 in \cite{Huybrechts2010}]
\label{theorem:single-point-moduli}
Let $X$ be a K3 surface. Suppose that $\mathcal{M}_X^{s} (r, c_1 ,c_2)$ is non-empty and $\text{exp.dim}(r,c_1,c_2)=0$. 
Then, $\mathcal{M}_X^{ss} (r, c_1 ,c_2)$ contains only a single point, which is represented by a stable locally free sheaf on $X$. 
\label{th:HL2010}
\end{theorem}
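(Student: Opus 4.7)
The plan is to pass to the Mukai-vector formalism on the K3 surface $X$, where the expected-dimension hypothesis becomes a single arithmetic identity, and then to exploit Serre duality. To each coherent sheaf $\mathcal{E}$ on $X$ associate its Mukai vector $v(\mathcal{E}) := (r,\, c_1,\, (c_1^2-2c_2)/2 + r) \in H^{*}(X,\Z)$, and equip $H^{*}(X,\Z)$ with the Mukai pairing $\langle (a,\alpha,b),(c,\gamma,d)\rangle := \alpha \cdot \gamma - ad - bc$. Hirzebruch--Riemann--Roch on $X$ then gives $\chi(\mathcal{E},\mathcal{F}) = -\langle v(\mathcal{E}), v(\mathcal{F}) \rangle$ and
\[
\text{exp.dim}(r,c_1,c_2) = \langle v,v\rangle + 2,
\]
so $\text{exp.dim}=0$ is equivalent to $\langle v,v\rangle = -2$, and consequently $\chi(\mathcal{E},\mathcal{F}) = 2$ for any two sheaves sharing this Mukai vector.

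First I would prove that the stable locus contains at most one isomorphism class. Take two stable sheaves $\mathcal{E},\mathcal{F}$ with $v(\mathcal{E}) = v(\mathcal{F}) = v$ and suppose $\mathcal{E} \not\cong \mathcal{F}$. A nonzero morphism between two Gieseker-stable sheaves of the same reduced Hilbert polynomial must be an isomorphism, so $\Hom(\mathcal{E},\mathcal{F}) = 0$, and Serre duality on $X$ (using $K_X \cong \mathcal{O}_X$) gives $\operatorname{Ext}^2(\mathcal{E},\mathcal{F}) \cong \Hom(\mathcal{F},\mathcal{E})^{\vee} = 0$. Then $2 = \chi(\mathcal{E},\mathcal{F}) = -\dim\operatorname{Ext}^1(\mathcal{E},\mathcal{F}) \leq 0$, a contradiction. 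I would then verify local freeness using the reflexive hull $\mathcal{E}^{\vee\vee}$: if $\mathcal{E}$ is not locally free, then $\mathcal{E}^{\vee\vee}$ has the same $c_1$ and strictly smaller $c_2$, is still simple, and yields a Mukai vector $v'$ with $\langle v',v'\rangle < -2$; but simplicity on K3 forces $\langle v',v'\rangle \geq -2$ via the same $\chi$ computation, a contradiction.

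It remains to rule out strictly Gieseker-semistable representatives, so that $\mathcal{M}^{ss}$ coincides with $\mathcal{M}^{s}$ and is a single point. A Jordan--H\"older decomposition produces stable factors $\mathcal{E}_i$ with Mukai vectors $v_i$ and multiplicities $n_i$ such that $v = \sum n_i v_i$, each factor sharing the reduced Hilbert polynomial of $\mathcal{E}$. Stability yields $\langle v_i, v_i\rangle \geq -2$; non-isomorphic factors with equal reduced Hilbert polynomial satisfy $\langle v_i, v_j\rangle \geq 0$ by the morphism argument above. If all factors are isomorphic to a single $\mathcal{E}_0$ of multiplicity $n \geq 2$, then $v = nv_0$ forces $-2 = n^2 \langle v_0, v_0 \rangle$, which has no solution with $\langle v_0, v_0 \rangle$ an even integer.

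The main obstacle is the general case of several distinct Jordan--H\"older factors: the pairing inequalities alone are compatible with, for example, $\langle v_1, v_1\rangle = \langle v_2, v_2\rangle = -2$ and $\langle v_1, v_2\rangle = 1$, giving $\langle v,v\rangle = -2$ with no immediate contradiction. A genuine obstruction must use the full proportionality of reduced Hilbert polynomials, which pins each $v_i$ down to $(r_i/r)v$ modulo a component orthogonal to the polarisation, combined with the integrality and parity of the Mukai pairing. Once this case is handled, the three steps assemble to show that $\mathcal{M}^{ss}_X(r, c_1, c_2)$ is a single point represented by a stable locally free sheaf.
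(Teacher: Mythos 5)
This statement is quoted from the literature (Theorem 6.1.6 in Huybrechts--Lehn) and the paper gives no proof of its own, so there is nothing to compare against directly; I can only assess your argument on its merits. Your setup is correct: $\text{exp.dim}=\langle v,v\rangle+2$ on a K3, so the hypothesis is $\langle v,v\rangle=-2$, and your first step (two non-isomorphic stable sheaves with the same Mukai vector would force $2=\chi(\mathcal{E},\mathcal{F})=-\dim\operatorname{Ext}^1\leq 0$) is exactly right. The local-freeness step is essentially right as well, though it silently uses that $\mathcal{E}^{\vee\vee}$ is simple, which needs a short separate argument (a nonzero endomorphism of $\mathcal{E}^{\vee\vee}$ is determined by its restriction to $\mathcal{E}$, since the quotient is zero-dimensional and $\mathcal{E}^{\vee\vee}$ is torsion-free).

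The genuine gap is the one you flag yourself: the strictly semistable case. The Jordan--H\"older numerology cannot be pushed through --- as your own example $\langle v_1,v_1\rangle=\langle v_2,v_2\rangle=-2$, $\langle v_1,v_2\rangle=1$ shows, the pairing inequalities are consistent with $\langle v,v\rangle=-2$, and no refinement via reduced Hilbert polynomials rescues this (such configurations of Mukai vectors really do occur on K3 surfaces). The decomposition approach is the wrong tool here. The standard argument instead uses the hypothesis that $\mathcal{M}^{s}$ is non-empty in an essential way: fix the stable sheaf $E$ produced by your first two steps, let $F$ be \emph{any} Gieseker-semistable sheaf with the same invariants, and compute $\chi(E,F)=-\langle v,v\rangle=2>0$. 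Hence $\Hom(E,F)\neq 0$ or $\operatorname{Ext}^2(E,F)\cong\Hom(F,E)^{\vee}\neq 0$. A nonzero map $E\to F$ from a stable sheaf to a semistable sheaf with the same reduced Hilbert polynomial must be injective (its image is a quotient of $E$ and a subsheaf of $F$, which is incompatible with stability unless the kernel vanishes), and equality of full Hilbert polynomials then forces $E\cong F$; a nonzero map $F\to E$ is handled symmetrically, being forced to be surjective. This shows every semistable sheaf with Mukai vector $v$ is isomorphic to $E$, which is what ``$\mathcal{M}^{ss}$ is a single point'' requires, and it bypasses Jordan--H\"older factors entirely. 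With that replacement your outline becomes a complete proof.
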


Hereafter, when we say stable vector bundles, it means $\mu$-stable vector bundles, unless otherwise stated. 

In later sections, we use the Generalised Hoppe Criterion from \cite{Jardim2017}, to examine whether bundles we construct are stable or not:

\begin{theorem}[Generalised Hoppe Criterion, Theorem 3 in \cite{Jardim2017}]
    \label{theorem:hoppe-criterion}
Let $X$ be an algebraic variety with a polarisation $H$ and $\Pic(X) \cong \mathbb{Z}^l$ for some $l \geq 0$.
Let $E$ be a holomorphic vector bundle of rank $r$ on $X$. 

\begin{enumerate}
    \item 
    If
    \[ H^0 (X, (\Lambda^s E) \otimes L) =0 \]
    for all $L \in \Pic (X)$ and all $1 \leq s \leq r-1$ with $\deg_{H} (L) \leq - s \mu (E)$, then $E$ is stable. 

    \item 
    If $E$ is stable, then
    \[ H^0 (X, E \otimes L) =0 \]
    for all $L \in \Pic (X)$ with $\deg_{H} (L) \leq - s \mu (E)$.
\end{enumerate}
\end{theorem}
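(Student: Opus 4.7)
The plan is to establish both directions of the criterion by translating the slope inequalities defining $\mu$-(in)stability into the existence, or non-existence, of morphisms out of line bundles. This is the standard strategy behind the classical Hoppe criterion, and the generalisation to $\Pic(X) \cong \Z^\ell$ amounts to keeping track of the correct twist $L$ that appears.

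For part (1), I would argue by contrapositive. Suppose $E$ is not $\mu$-stable, so there is a proper non-zero subsheaf $F \subset E$ with $\mu(F) \geq \mu(E)$. Replacing $F$ by its saturation, I may assume $E/F$ is torsion-free, so $F$ is reflexive. Set $s = \rank F$; then $1 \leq s \leq r-1$. The inclusion induces a morphism $\Lambda^s F \to \Lambda^s E$, and since $\Lambda^s E$ is reflexive (being an exterior power of a locally free sheaf on a smooth surface) and $\det F := (\Lambda^s F)^{**}$ is a line bundle agreeing with $\Lambda^s F$ outside a codimension-two locus, this morphism extends uniquely by Hartogs to a nonzero map $\det F \to \Lambda^s E$. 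Setting $L := (\det F)^{-1}$, this is a nonzero section of $(\Lambda^s E) \otimes L$, and from $\mu(F) \geq \mu(E)$ I get $\deg_H L = -\deg_H F \leq -s\,\mu(E)$, contradicting the hypothesis.

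For part (2), assume $E$ is $\mu$-stable but $H^0(X, E \otimes L) \neq 0$ for some $L$ with $\deg_H L \leq -\mu(E)$. A nonzero section corresponds to a nonzero morphism $\phi : L^{-1} \to E$. Let $F \subset E$ be the saturation of the image of $\phi$; since $E$ is torsion-free, $F$ is a rank-one reflexive subsheaf and hence a line bundle on the smooth surface $X$, with $L^{-1} \subset F$. Then $\deg_H F \geq \deg_H L^{-1} = -\deg_H L \geq \mu(E)$, so $\mu(F) \geq \mu(E)$, contradicting $\mu$-stability. (I read the ``$s$'' in the displayed bound of item~(2) as a typographical slip: the natural statement involves only $\mu(E)$, since $E$ itself rather than an exterior power is being tested.)

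The only real technical point is the extension argument in part (1): one must ensure that $\Lambda^s F \to \Lambda^s E$ factors through $\det F$. The hard part is therefore checking that $\Lambda^s E$ is reflexive and that the two sheaves $\Lambda^s F$ and $\det F$ agree away from codimension two, so that Hartogs' theorem applies. On a smooth surface with $E$ locally free---the only setting needed in this paper---both are automatic, so the proof goes through without serious obstacle.
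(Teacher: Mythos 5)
The paper does not prove this statement: it is imported verbatim as Theorem~3 of the cited work of Jardim--Menet--Prata--S\'a Earp, so there is no internal proof to compare against. Your argument is the standard proof of the Hoppe criterion and is essentially correct. In part (1) the chain ``destabilising subsheaf $F$ of rank $s$ $\Rightarrow$ nonzero map $\Lambda^s F \to \Lambda^s E$ $\Rightarrow$ factorisation through $\det F = (\Lambda^s F)^{**}$ (since $\Lambda^s E$ is locally free, hence reflexive) $\Rightarrow$ nonzero section of $(\Lambda^s E)\otimes(\det F)^{-1}$ with $\deg_H (\det F)^{-1} \le -s\mu(E)$'' is exactly right; you might add explicitly that $\Lambda^s F_\eta \to \Lambda^s E_\eta$ is injective at the generic point, so the map is indeed nonzero, and that one only needs to test saturated subsheaves with $1 \le \rank F \le r-1$ (rank-$r$ subsheaves never strictly destabilise, which squares the contrapositive with the range of $s$ in the hypothesis). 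Part (2) is likewise correct, and your diagnosis of the stray ``$s$'' in the displayed bound as a typographical slip matches the intended statement. Two small caveats: the theorem is stated for an ``algebraic variety'' of arbitrary dimension, and your extension argument needs $X$ smooth (or at least normal and locally factorial) so that the rank-one reflexive sheaf $\det F$ is actually a line bundle and Hartogs-type extension across codimension two applies --- this is satisfied in every application in the paper and in the source; and in part (2) one should assume $r \ge 2$, since for a line bundle the claim fails (take $E = L = \mathcal{O}_X$). Neither caveat is a genuine gap in the setting at hand.
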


\subsection{Monad bundles}

One way to construct holomorphic bundles is using the \emph{monad construction}, that is as the homology or kernel of a complex of vector bundles
\[
0 \rightarrow A \rightarrow B \rightarrow C \rightarrow 0,
\]
which is exact at $A$ and $C$ (but not necessarily at $B$).
Even if the constituents of the complex are easy to understand, one can construct many complicated bundles by the monad construction.
For example, on $\mathbb{P}^2$, it is an application of a theorem by Beilinson that many moduli spaces of stable bundles of rank $2$ can be constructed via the monad construction, see \cite[Chapter 2 Section 3.2]{Okonek2011}.

We begin by recalling how to compute Chern classes of bundles fitting into a short exact sequence:

\begin{proposition}
    \label{proposition:chern-classes-of-monads}
    \leavevmode
    \begin{enumerate}
        \item 
        Let $X$ be a smooth manifold and $0 \rightarrow A \rightarrow B \rightarrow C \rightarrow 0$ be a short exact sequence of complex vector bundles over it.
        Then
        \begin{align*}
            c_1(A)&=c_1(B)-c_1(C),
            \\
            c_2(A)&=c_2(B)-c_1(B)c_1(C)+c_1(C)^2-c_2(C).           
        \end{align*}
        \item 
        On $\mathbb{P}^2$:
        $c_1 \left( \bigoplus_i \mathcal{O}(k_i) \right)=\sum_i k_i$,
        $c_2 \left( \bigoplus_i \mathcal{O}(k_i) \right)=\sum_{i<j} k_i k_j$. 

        \item 
        On $\mathbb{P}^1 \times \mathbb{P}^1$:
        $c_1 \left( \bigoplus_i \mathcal{O}(k_i,m_i) \right)=(\sum k_i, \sum m_i)$,
        $c_2 \left( \bigoplus_i \mathcal{O}(k_i,m_i) \right)=\sum_{i<j} k_i m_j+k_j m_i$.
    \end{enumerate}
\end{proposition}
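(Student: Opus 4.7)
The plan is to deduce all three parts from the Whitney sum formula $c(B)=c(A)\,c(C)$ for short exact sequences of complex vector bundles, together with the splitting principle applied to direct sums of line bundles.

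For part~(1), I would start with the total Chern class identity $c(B)=c(A)\,c(C)$ and rearrange it to $c(A)=c(B)\,c(C)^{-1}$, where the inverse is taken in the formal power series in the Chow (or cohomology) ring. Expanding $c(C)^{-1}=1-c_1(C)+\bigl(c_1(C)^2-c_2(C)\bigr)+\cdots$ and collecting terms of degree~1 and~2 gives
\begin{align*}
    c_1(A)&=c_1(B)-c_1(C),\\
    c_2(A)&=c_2(B)-c_1(B)c_1(C)+c_1(C)^2-c_2(C),
\end{align*}
as desired. (Alternatively, one can derive the degree-2 formula directly from $c_2(A)=c_2(B)-c_1(A)c_1(C)-c_2(C)$ by substituting the expression for $c_1(A)$.) This is entirely mechanical once the Whitney sum formula is in hand.

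For parts~(2) and~(3), the idea is the same: for a direct sum of line bundles, the total Chern class factors as $c\bigl(\bigoplus_i \mathcal{L}_i\bigr)=\prod_i\bigl(1+c_1(\mathcal{L}_i)\bigr)$. On $\mathbb{P}^2$ with hyperplane class $H$ satisfying $H^3=0$, one has $c_1(\mathcal{O}(k))=kH$, so expanding $\prod_i(1+k_iH)$ and reading off the coefficients of $H$ and $H^2$ yields $c_1=\sum_i k_i$ and $c_2=\sum_{i<j}k_ik_j$. On $\mathbb{P}^1\times\mathbb{P}^1$ with hyperplane classes $H_1,H_2$ satisfying $H_1^2=H_2^2=0$ and $H_1H_2$ equal to the class of a point, one has $c_1(\mathcal{O}(k,m))=kH_1+mH_2$. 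Expanding $\prod_i(1+k_iH_1+m_iH_2)$ and using $H_1^2=H_2^2=0$ forces all degree-2 contributions to come from cross terms $(k_iH_1+m_iH_2)(k_jH_1+m_jH_2)$ with $i<j$, which collapse to $(k_im_j+k_jm_i)H_1H_2$; collecting gives the stated formulas.

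There is essentially no main obstacle here: the proposition is a bookkeeping exercise combining the Whitney sum formula with the explicit cohomology ring structure of $\mathbb{P}^2$ and $\mathbb{P}^1\times\mathbb{P}^1$. The only point that needs a sentence of care is identifying the Chern classes (which live in cohomology) with the integer coefficients used in the statement, via the natural generators $H$ on $\mathbb{P}^2$ and $(H_1,H_2)$ on $\mathbb{P}^1\times\mathbb{P}^1$; this is the usual abuse of notation and requires no further argument.
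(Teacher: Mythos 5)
Your proposal is correct and follows essentially the same route as the paper, which also invokes the Whitney product formula for the Chern polynomial and the expansion of its inverse, leaving the rest as a mechanical computation. Your write-up simply spells out the degree-by-degree bookkeeping (including the cohomology ring relations $H^3=0$ and $H_1^2=H_2^2=0$) that the paper leaves implicit.
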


\begin{proof}
    The Chern polynomial $c_t(E)=1+c_1(E)t+c_2(E)t^2+\dots$ satisfies $c_t(B)=c_t(A)c_t(C)$ and $(c_t(A))^{-1}=(1-c_1(A)+(c_1(A)^2-c_2(A))t^2+\dots$ and the claims follow from this.
\end{proof}

Complex conjugation maps an algebraic variety over $\C$ to itself, if the variety is defined by polynomials with real coefficients.
More generally, being defined over $\R$ is equivalent to the existence of an anti-holomorphic involution by \cite[Proposition 1.3]{Silhol1989}, and we take this as our definition of real structure of a complex manifold:

\begin{definition}
    Let $X$ be a complex manifold.
    An anti-holomorphic involution $\sigma: X \rightarrow X$ is called a \emph{real structure of $X$}.
\end{definition}

On a complex manifold $X$ equipped with a real structure $\sigma$, it makes sense to consider bundles that are compatible with $\sigma$.
The following definition captures this:

\begin{definition}[p.8 in \cite{Hartshorne1978}]
    Let $X$ be a complex manifold with real structure $\sigma:X \rightarrow X$.
    Let $E \rightarrow X$ be a complex vector bundle over $X$.

    A \emph{real structure} on $E$ is an anti-holomorphic map $\hat{\sigma}: E \rightarrow E$ covering $\sigma$ such that $(\hat{\sigma})^2=\Id$.
    A \emph{symplectic structure} is the same, except that $(\hat{\sigma})^2=-\Id$.

    We say that two real or symplectic structures $\hat{\sigma}$ and $\hat{\sigma}'$ are equivalent if $\hat{\sigma}'=\lambda \hat{\sigma}$ for some $\lambda \in \C$, $|\lambda|=1$.
\end{definition}

An example of a complex vector bundle with a real structure is the tangent bundle of a complex manifold $X$ with real structure $\sigma$.
In this case, $\d \sigma: TX \rightarrow TX$ is a real structure.
Other interesting examples come from the following Proposition:

\begin{proposition}
    \label{proposition:real-structure}
    On $\mathbb{P}^{n_1} \times \mathbb{P}^{n_2}$ consider the complex
    \[
        A=\mathcal{O}(a_1,a_1') \oplus \dots \oplus \mathcal{O}(a_k,a_k')
        \overset{a}{\rightarrow}
        B=\mathcal{O}(b_1,b_1') \oplus \dots \oplus \mathcal{O}(b_l,b_l')
        \overset{b}{\rightarrow}
        C=\mathcal{O}(c_1,c_1') \oplus \dots \oplus \mathcal{O}(c_m,c_m').
    \]
    Viewing the maps $a$ and $b$ as polynomials in the coordinates of $\mathbb{P}^{n_1}$ and $\mathbb{P}^{n_2}$, we have the following:
    \begin{enumerate}
        \item 
        If the map $b$ is defined by polynomials with real coefficients, then the kernel monad $\Ker(b)$ admits a real structure.

        \item
        If both maps $a$ and $b$ are defined by polynomials with real coefficients, then the homology monad $\Ker(b)/\Im(a)$ admits a real structure.
    \end{enumerate}
\end{proposition}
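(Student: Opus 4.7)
The plan is to equip each direct summand of $A$, $B$, $C$ with its canonical real structure inherited from complex conjugation of homogeneous coordinates, and then observe that a bundle morphism between such direct sums intertwines these real structures precisely when its defining polynomials have real coefficients. Let $\sigma: \P^{n_1} \times \P^{n_2} \to \P^{n_1} \times \P^{n_2}$ denote the real structure given by simultaneous conjugation of homogeneous coordinates.

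First, I would define the real structures on the line bundles. The tautological sub-bundle $\mathcal{O}(-1,0) \hookrightarrow \underline{\C^{n_1+1}}$ has fiber $\C \cdot x$ over $([x],[y])$; fiberwise conjugation $v \mapsto \overline{v}$ on the trivial bundle restricts to an anti-holomorphic involution $\hat{\sigma}_{-1,0}$ covering $\sigma$. The same construction produces $\hat{\sigma}_{0,-1}$ on $\mathcal{O}(0,-1)$, and by taking duals and tensor powers one obtains a real structure $\hat{\sigma}_{k,l}$ on every $\mathcal{O}(k,l)$. On a direct sum, one takes the component-wise sum.

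Next, I would verify the intertwining property. A morphism $\phi: \mathcal{O}(a,a') \to \mathcal{O}(b,b')$ is multiplication by a bihomogeneous polynomial $P$ of bidegree $(b-a, b'-a')$, and a direct calculation in affine coordinates—equivalently, noting that on sections of $\mathcal{O}(k,l)$ the map $\hat{\sigma}_{k,l}$ sends a bihomogeneous polynomial to the one obtained by conjugating its coefficients—shows that $\hat{\sigma}_{b,b'} \circ \phi = \phi \circ \hat{\sigma}_{a,a'}$ exactly when $P$ has real coefficients. Extending component-wise, any matrix of real polynomials defines a bundle morphism that intertwines the direct-sum real structures on its domain and codomain.

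Finally, both claims follow easily. For (1), the identity $\hat{\sigma}_C \circ b = b \circ \hat{\sigma}_B$ forces $\hat{\sigma}_B(\Ker b) \subseteq \Ker b$, so restriction gives the required real structure on $\Ker(b)$. For (2), the analogous identity for $a$ additionally gives $\hat{\sigma}_B(\Im a) \subseteq \Im a$, so $\hat{\sigma}_B$ descends to an involution on $\Ker(b)/\Im(a)$; anti-holomorphy and $(\hat{\sigma})^2 = \Id$ are inherited from $\hat{\sigma}_B$. I do not expect any real obstacle here; the only bookkeeping is setting up $\hat{\sigma}_{k,l}$ uniformly for all $(k,l) \in \Z^2$ through duals and tensor products, which is functorial and straightforward once the two base cases are in place.
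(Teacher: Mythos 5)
Your proposal is correct and follows essentially the same strategy as the paper: equip each line-bundle summand with its canonical real structure covering coordinate conjugation, observe that a matrix of real polynomials intertwines these structures, and conclude that $\Ker(b)$ and $\Im(a)$ are preserved. The only (cosmetic) difference is that you build the real structure on $\mathcal{O}(k,l)$ functorially from the tautological sub-bundle via duals and tensor products, whereas the paper defines it directly on fibers using local sections with real coefficients.
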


\begin{proof}
    Consider first the case of $n_2=0$, i.e., $\mathbb{P}^n := \mathbb{P}^{n_1} \times \mathbb{P}^{n_2}$ and
    \[
        A=\mathcal{O}(a_1) \oplus \dots \oplus \mathcal{O}(a_k)
        \overset{a}{\rightarrow}
        B=\mathcal{O}(b_1) \oplus \dots \oplus \mathcal{O}(b_l)
        \overset{b}{\rightarrow}
        C=\mathcal{O}(c_1) \oplus \dots \oplus \mathcal{O}(c_m).
    \]

    For any $k \in \mathbb{Z}$ and $p \in \mathbb{P}^n$ let $s$ be a local section of $\mathcal{O}(k)$, i.e., a rational function of degree $k$, with real coefficients and such that $s(p) \neq 0$.
    Let then $\xi$ be the map defined by
    \begin{align*}
        \xi: \mathcal{O}(k)_p & \rightarrow \mathcal{O}(k)_{\overline{p}}
        \\
        s(p) &\mapsto s(\overline{p})
    \end{align*}
    and requiring that it is complex anti-linear.
    This is independent of the choice of $s$, if $t$ is another section and $t(p)=\lambda \cdot s(p)$ for $\lambda \in \C$, then $t(\overline{p})=\overline{\lambda} \cdot s(\overline{p})$, because $s,t$ have real coefficients.
    Furthermore, $\xi$ covers the complex conjugation map on $\mathbb{P}^n$ and satisfies $\xi^2=\Id$.
    The definition of $\xi$ extends to the direct sums $A,B,C$.
    Thus, it remains to check that $\xi$ preserves $\Ker b$ and $\Im a$:

    \begin{enumerate}
        \item 
        On an affine patch $U_i = \{x \in \mathbb{P}^n: x_i=1\} \subset \mathbb{P}^n$ fix $p \in U_i$.
        Let $v=(v_1,\dots,v_l)^T \in \Ker b_p \subset B_p$ and $v_i=\lambda_i \cdot s_i(p)$ for $\lambda_i \in \C$ and local sections $s_i$ of $\mathcal{O}(b_i)$ for $i \in \{1,\dots,l\}$ with the property that they, viewed as homogeneous rational functions, have real coefficients.
        Thus we can view $v_i \in \C$, and $v \in \Ker b_p$,  then this means that
        \begin{align}
            \label{equation:b(p)(v)=0}
            b(p) \cdot v=0 \in \C^m,
        \end{align}
        where $\cdot$ denotes the ordinary matrix multiplication of $b(p) \in \C^{m \times l}$ and $v \in \C^l$.
        Then
        \[
        b(\overline{p}) \cdot \xi(v)
        =
        b(\overline{p}) \cdot \xi( (\lambda_i s_i(p))_{1 \leq i \leq l})
        =
        b(\overline{p}) \cdot (\overline{\lambda_i} s_i(\overline{p}))_{1 \leq i \leq l}
        =
        \overline{b(p)} \cdot (\overline{\lambda_i} \overline{s_i(p)})_{1 \leq i \leq l}
        =
        \overline{b(p) \cdot v}
        =0,
        \]
        where in the second step we used that $\xi$ is complex anti-linear;
        and in the third step we used that $b$ and $s_i$ have real coefficients;
        and in the last step we used \cref{equation:b(p)(v)=0}.

        \item 
        Similar to before, let $w=(w_1,\dots,w_k)^T \in A_p$.
        Now write also $a=(a_{ij})$, where the matrix entries $a_{ij}$ are homogeneous polynomials.
        Then
        \begin{align*}
            \xi (a(p) \cdot w)
            &=
            \xi
            \left(
            \left(
            \sum_{1 \leq j \leq k}
            a_{ij}(p)
            \cdot w_j
            \right)_i
            \right)
            \\
            &=
            \left(
            \xi
            \left(
            \sum_{1 \leq j \leq k}
            a_{ij}(p)
            \cdot w_j
            \right)
            \right)_i
            \\
            &=
            \left(
            \sum_{1 \leq j \leq k}
            \overline{a_{ij}(p)} \cdot \xi(w_j) 
            \right)_i
            \\
            &=
            a(\overline{p}) \cdot \xi(w),
        \end{align*}
        where in the third step we used that $\xi$ is complex anti-linear,
        and in the last step we used that the matrix entries $a_{ij}$ are polynomials with real coefficients.
        Thus, $v=a(p) \cdot w \in \Im (a(p))$ implies $\xi(v) \in \Im (a(\overline{p}))$, which proves the claim. \qedhere
    \end{enumerate}
    
\end{proof}

\subsection{Stable bundles on covering spaces}
\label{subsection:stable-bundles-on-covering-spaces}

Let $X$ be a K3 surface, 
$\iota: X \rightarrow X$ be a holomorphic involution 
and $\sigma: X \rightarrow X$ be  anti-holomorphic involution commuting with $\iota$.
We can view $X$ as a branched double cover over the space $X/\<\iota\>$, where the branch locus is $\Fix(\iota)$.
Later we will construct stable bundles on the base space $X/\<\iota\>$ and pull them back to $X$.
The following two results give two sufficient criteria for this pullback to be stable:

\begin{proposition}[Lemma 9.1.9 in \cite{Donaldson1990}]
\label{proposition:donaldson-kronheimer-stable-pullbacks}
    Let $\pi: X \rightarrow \P^2$ be a branched double cover with non-empty branch locus.
    Let $E \rightarrow \P^2$ be a rank $2$ vector bundle that is stable with respect to $\mathcal{O}_{\P^2}(1)$ on $\P^2$.
    Then $\pi^* E$ is stable with respect to the polarisation $\pi ^* \mathcal{O}_{\P^2}(1)$ on $X$.
\end{proposition}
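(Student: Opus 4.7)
The plan is to proceed by contradiction. Assume $\pi^{*}E$ is not $\mu$-stable with respect to $\tilde{H}:=\pi^{*}\mathcal{O}_{\mathbb{P}^{2}}(1)$, and extract a saturated rank-$1$ subsheaf $L \subset \pi^{*}E$ with $\mu(L) \geq \mu(\pi^{*}E)$. Saturation in a locally free sheaf on a smooth surface forces $L$ to be a line bundle, and $\tilde{H}^{2}=2$ gives $\mu(\pi^{*}E)=2\mu(E)$. The canonical identification $\iota^{*}\pi^{*}E = \pi^{*}E$ yields a second saturated sub-line bundle $\iota^{*}L \subset \pi^{*}E$ with the same slope, and the argument splits on whether $L = \iota^{*}L$ as subsheaves of $\pi^{*}E$.

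In the invariant case $L=\iota^{*}L$, the inclusion $L \hookrightarrow \pi^{*}E$ is $\iota$-equivariant for a unique linearisation $\phi$ on $L$. Since the canonical $\iota$-action on $\pi^{*}E$ is trivial on fibres over the ramification locus $R$, one must have $\phi|_{R}=+\Id$. Standard equivariant descent for branched double covers then produces a line bundle $L'$ on $\mathbb{P}^{2}$ with $\pi^{*}L' \cong L$, and the equivariant inclusion descends to $L' \hookrightarrow E$. A degree check gives $\deg L' = \tfrac{1}{2}\deg L \geq \mu(E)$, so $L'$ destabilises $E$, contradicting stability of $E$.

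In the non-invariant case $L \neq \iota^{*}L$, saturation forces $L \cap \iota^{*}L = 0$. The composition $\iota^{*}L \hookrightarrow \pi^{*}E \twoheadrightarrow \pi^{*}E / L$ is a non-zero map of line bundles and hence an injection. Comparing degrees forces $\mu(L)=\mu(\pi^{*}E)$ and a $0$-dimensional cokernel; since ideal sheaves of $0$-dimensional subschemes on a smooth surface are not locally free, this injection is in fact an isomorphism. Hence the sequence $0 \to L \to \pi^{*}E \to \iota^{*}L \to 0$ splits and $\pi^{*}E \cong L \oplus \iota^{*}L$. The projection $p_{L}$ onto the first summand satisfies $\iota^{*}p_{L} \neq p_{L}$, so $p_{L}-\iota^{*}p_{L}$ is a non-zero $\iota$-antiinvariant endomorphism of $\pi^{*}E$. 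Under the decomposition $\pi_{*}\End(\pi^{*}E) = \End(E) \oplus \End(E)(-d)$ induced by $\pi_{*}\mathcal{O}_{X} = \mathcal{O} \oplus \mathcal{O}(-d)$ (where $2d$ is the degree of the branch curve), the antiinvariant part corresponds to $\End(E)(-d)$, so $p_{L}-\iota^{*}p_{L}$ descends to a non-zero morphism $E \to E(-d)$. Since $E$ is stable and $\mu(E(-d)) = \mu(E)-d < \mu(E)$ (using $d \geq 1$, which holds precisely because the branch locus is non-empty), no such morphism can exist, contradiction.

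The main obstacle will be the non-invariant case. Both the genuine splitting of the short exact sequence (not just the abstract isomorphism $\iota^{*}L \cong \pi^{*}E/L$) and the correct identification of the $\iota$-antiinvariant part of $\pi_{*}\End(\pi^{*}E)$ with $\Hom(E, E(-d))$ require care. The hypothesis that the branch locus is non-empty enters exactly at the final step: it is what ensures $d \geq 1$, giving the strict slope inequality $\mu(E(-d)) < \mu(E)$ that rules out the descended morphism.
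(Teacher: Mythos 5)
Your argument is correct, but there is nothing in the paper to compare it against: the statement is imported verbatim from Donaldson--Kronheimer (Lemma 9.1.9) and the paper supplies no proof of its own. What you have written is a correct, self-contained rendition of the standard descent argument behind that lemma: take the saturated destabilising line bundle $L\subset\pi^*E$, descend it to $\P^2$ when $L=\iota^*L$, and otherwise split $\pi^*E\cong L\oplus\iota^*L$ and produce a non-zero anti-invariant endomorphism, i.e.\ a non-zero element of $\Hom(E,E(-d))$, which stability of $E$ forbids once $d\geq 1$. Two small points deserve one extra line each if you write this out in full. First, in the invariant case the claim $\phi|_R=+\Id$ needs the injectivity of $L|_R\to(\pi^*E)|_R$; saturation only gives this away from the finitely many points where the torsion-free quotient fails to be locally free, which suffices because the linearisation is a locally constant sign on each component of $R$. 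Second, in the non-invariant case the passage from ``degree of the cokernel is zero'' to ``the cokernel is supported in dimension zero'' uses that $\pi^*\mathcal{O}_{\P^2}(1)$ is ample (pullback of an ample bundle under a finite map), not merely nef; with that, your observation that $\iota^*L$ is locally free while a sheaf of the form $M\otimes I_Z$ with $Z\neq\emptyset$ is not correctly upgrades the injection to an isomorphism and splits the sequence. The hypothesis of a non-empty branch locus is indeed used exactly where you say it is, namely to get $d\geq1$ and hence $\mu(E(-d))<\mu(E)$.
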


\begin{proposition}
    \label{proposition:stable-on-pullback}
    Let $\pi: X \rightarrow Y$ be a branched cover of a smooth projective surface.
    Assume that the map $\pi^* : \Pic Y \rightarrow \Pic X$ is an isomorphism.
    Then the following is true: let $E \rightarrow Y$ be a holomorphic vector bundle of any rank satisfying the condition from part 1 of \cref{theorem:hoppe-criterion} with respect to the polarisation $H$ on $Y$.
    Then $\pi^* E$ is stable with respect to the polarisation $\pi ^* H$ on $X$.
\end{proposition}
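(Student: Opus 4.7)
The plan is to verify the hypothesis of part 1 of the Generalised Hoppe Criterion (\cref{theorem:hoppe-criterion}) for $\pi^* E$ on $X$, by reducing it to the analogous vanishing condition for $E$ on $Y$. First, fix $L \in \Pic X$ and $1 \le s \le r-1$ with $\deg_{\pi^* H}(L) \le -s\mu(\pi^* E)$. Using the assumption that $\pi^*\colon\Pic Y \to \Pic X$ is an isomorphism, I would write $L = \pi^* M$ for a unique $M \in \Pic Y$; since pullback commutes with tensor products and exterior powers, $\Lambda^s(\pi^* E) \otimes L = \pi^*(\Lambda^s E \otimes M)$.

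Second, I would translate the slope condition. Setting $d := \deg \pi$, pullback multiplies intersection numbers of divisors on $Y$ by $d$, whence $\deg_{\pi^* H}(\pi^* M) = d\deg_H M$ and $\mu(\pi^* E) = d\mu(E)$. The inequality on $X$ is therefore equivalent to $\deg_H M \le -s\mu(E)$ on $Y$.

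Third, I would compute the cohomology by pushing forward. Applied to the finite flat map $\pi$, the projection formula gives
\[
H^0\bigl(X, \pi^*(\Lambda^s E \otimes M)\bigr)
= H^0\bigl(Y, (\Lambda^s E \otimes M) \otimes \pi_* \mathcal{O}_X\bigr).
\]
I would then split $\pi_* \mathcal{O}_X = \bigoplus_i N_i$ as a direct sum of line bundles in $\Pic Y$; in the branched double cover case relevant to the applications in this paper this takes the form $\pi_* \mathcal{O}_X = \mathcal{O}_Y \oplus N$, where $N$ is the line bundle dual to the effective half of the branch divisor, so that $\deg_H N \le 0$. The displayed $H^0$ then becomes $\bigoplus_i H^0(Y, \Lambda^s E \otimes (M \otimes N_i))$, and each summand vanishes by the hypothesis on $E$: indeed $\deg_H(M \otimes N_i) \le \deg_H M \le -s\mu(E)$ since $\deg_H N_i \le 0$. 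This verifies the Hoppe hypothesis for $\pi^* E$, and stability of $\pi^* E$ follows.

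The main obstacle is the splitting of $\pi_* \mathcal{O}_X$ into line bundles of non-positive $H$-degree. For branched double covers this is standard and covers the applications in this paper; for a general branched cover one would need extra input (for instance a Galois group action, or an explicit description of the trace-zero summand of $\pi_* \mathcal{O}_X$) both to obtain the line-bundle decomposition and to control the degrees of its factors.
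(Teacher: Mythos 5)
Your proposal is correct, and it is in fact more careful than the paper's own proof. The paper's argument consists only of your first two steps: writing $L = \pi^* M$ via the isomorphism $\pi^*\colon \Pic Y \to \Pic X$ and translating the degree inequality (the paper hard-codes a double cover, so the factor is $\tfrac{1}{2}$ rather than $\tfrac{1}{d}$), after which it passes directly from the hypothesis $H^0\bigl(Y, (\Lambda^s E) \otimes M\bigr) = 0$ to the desired conclusion $H^0\bigl(X, (\Lambda^s \pi^* E) \otimes L\bigr) = 0$. That last implication is exactly what your third step supplies and what the paper leaves implicit: for a branched cover one has $H^0(X, \pi^* F) = H^0\bigl(Y, F \otimes \pi_* \mathcal{O}_X\bigr)$, which contains $H^0(Y,F)$ as a proper summand in general, so the trace-zero part of $\pi_* \mathcal{O}_X$ must also be killed. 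Your use of the projection formula together with $\pi_* \mathcal{O}_X = \mathcal{O}_Y \oplus N$, $\deg_H N \leq 0$, handles this cleanly, since the vanishing hypothesis on $E$ then applies to each twist $M \otimes N_i$. Your closing caveat about general branched covers is fair but harmless in context: the proposition is only ever applied to double covers of $\P^2$ and $\P^1 \times \P^1$, where the splitting and the sign of $\deg_H N$ are standard, and the paper's own degree computation already assumes degree $2$ anyway.
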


\begin{proof}
    By assumption we have that
    \[ H^0 (X, (\Lambda^s E) \otimes L) =0 \]
    for all $L \in \Pic (Y)$ and all $1 \leq s \leq r-1$ with $\deg_{H} (L) \leq - s \mu_H (E)$.

    Now let $B \in \Pic(X)$ and $1 \leq s \leq r-1$ with $\deg_{\pi^*H} (B) \leq - s \mu_{\pi^*H} (\pi^*E)$.
    We now show that 
    \[H^0 (X, (\Lambda^s \pi^*E) \otimes B) =0.\]
    
    By assumption, $B=\pi^* L$ for some $L \in \Pic(Y)$, and
    \begin{align}
        \label{equation:pullback-of-degree}
        \deg_{H} (L)
        =
        \frac{1}{2}
        \deg_{\pi^*H} (\pi^* L)
        \leq
        \frac{1}{2}
        \left(
        - s \mu_{\pi^*H} (\pi^*E)
        \right)
        =
        - s \mu_{H} (E).
    \end{align}
    The first equality holds because
    \[
        \deg_{\pi^*H} (\pi^* L)
        =
        c_1(\pi^* L) \cdot [\pi^* H]
        =
        \pi^* c_1(L) \cdot [\pi^* H]
        =
        2 c_1(L) \cdot [H]
        =
        2 \deg_H(L).
    \]
    The last equality in \cref{equation:pullback-of-degree} is checked analogously.
    The inequality in the middle of \cref{equation:pullback-of-degree} is precisely the assumption for $B$.
    Thus, by \cref{theorem:hoppe-criterion} we have that $\pi^* E$ is stable with respect to $\pi^* H$.
\end{proof}

\section{K3 surfaces with small Picard group}

The surjectivity of the period map (see \cite[Theorem 7.4.1]{Huybrechts2016}) for K3 surfaces ensures that every hyperbolic sublattice of $\Lambda_{K3}$ is realised as the Picard lattice of a K3 surface, see \cite[Corollaries 2.9 and 2.10]{Morrison84}.
Unfortunately, the proof is not constructive and in general, given a hyperbolic sublattice  $N$ of  $\Lambda_{K3}$, it is not easy to exhibit a geometric construction of a K3 surface $X$ such that $\Pic X \cong N$.

Since we are interested in K3 surfaces with low Picard numbers but not further specifications about the lattice, we focus on the other features requested by the construction above: the existence of a holomorphic involution and a real structure.
The easiest way to ensure the existence of a holomorphic involution on a K3 surface is to realise it as a double cover of another surface.
There are two classical such constructions: 
\begin{itemize}
    \item double covers of $\P^2$ ramified above a smooth sextic curve; and 
    \item double covers of $\P^1\times \P^1$ ramified above a smooth curve of bidegree $(4,4)$. 
\end{itemize}
Since $\P^2$ as well as $\P^1\times \P^1$ can be defined over  $\R$, in order to ensure the real structure on $X$ it is enough to have a branch locus also defined over $\R$.
In the next subsections, we review the theory of these two constructions and provide some explicit examples.
While the first construction is very well known and there is an abundant literature about it, 
the second construction, although classical, is encountered less frequently and for this reason, we will give more details about it.

\subsection{A branched double cover of \texorpdfstring{$\P^2$}{P2}}
\label{subsection:branched-P2}

Let $B\subset \P^2$ be a smooth plane curve of degree $6$ and define $X$ to be the double cover of $\P^2$ ramified above~$B$.
In this way $X$ can be viewed as the surface in $\P (1,1,1,3)$ defined by 
$$
X\colon w^2 = f(x,y,z),
$$
where $w$ is the variable of weight $3$ and $f$ is the polynomial defining $B$ in $\P^2$.
Let $L$ be a line in $\P^2$ and $H:=\pi^*L\subset X$ its pull-back to $X$.
Then $H$ is a double cover of $L$ ramified above the points $L \cap B$.
As $B$ has degree $6$, the curve $H$ is a double cover of a line ramified above 6 points, i.e., $H$ is a curve of genus $2$.

\begin{proposition}\label{p:DoublePlaneK3}
    $X$ is a K3 surface. If $\rho (X)=1$ then $\Pic X\cong \langle 2 \rangle $ is generated by the class of $\mathcal{O}(H)$.
\end{proposition}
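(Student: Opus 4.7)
The proof naturally splits into verifying the three defining properties of a K3 surface and then pinning down the Picard group under the rank hypothesis.

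The plan for the K3 part is to apply the standard theory of cyclic double covers. First I would argue smoothness via the Jacobian criterion applied to the equation $w^2 = f(x,y,z)$ in $\P(1,1,1,3)$: a singular point would project to a singular point of the branch locus $B = \{f=0\}$, contradicting the hypothesis that $B$ is smooth. Next, to compute $K_X$, I would invoke the general formula $K_X \cong \pi^*(K_{\P^2} \otimes \cO_{\P^2}(3))$ for the double cover $\pi: X \to \P^2$ branched over a divisor in $|\cO_{\P^2}(6)|$; since $K_{\P^2} \cong \cO_{\P^2}(-3)$, this gives $K_X \cong \cO_X$. Finally, for $H^1(X,\cO_X) = 0$, I would use that $\pi$ is finite and $\pi_*\cO_X = \cO_{\P^2} \oplus \cO_{\P^2}(-3)$, so by the Leray spectral sequence (or finiteness of $\pi$)
\[
H^1(X,\cO_X) \cong H^1(\P^2,\cO_{\P^2}) \oplus H^1(\P^2,\cO_{\P^2}(-3)) = 0.
\]

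For the Picard group statement, the key numerical input is $H^2 = 2$, which follows immediately from the projection formula: if $L \subset \P^2$ is a line, then $H^2 = (\pi^*L)^2 = \deg(\pi)\, L^2 = 2$. Now suppose $\rho(X) = 1$, so $\Pic X \cong \Z$ is generated by some class $D$. Since $X$ is a projective K3 surface, the signature of $\Pic X$ is $(1, \rho(X) - 1) = (1,0)$, hence the intersection form is positive definite on the rank-one lattice and $D^2 > 0$; moreover $D^2$ is even because the K3 lattice is even. Writing $H = kD$ for some integer $k$ and $D^2 = 2m$ with $m \geq 1$, the identity $2 = H^2 = 2k^2 m$ forces $k^2 m = 1$, so $k = \pm 1$ and $m = 1$. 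Thus $D = \pm H$ and $\Pic X \cong \langle 2 \rangle$ is generated by $\cO_X(H)$.

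No single step is genuinely hard; the main points requiring care are the canonical bundle computation (where one must cite the correct branched cover formula, or alternatively compute via adjunction in $\P(1,1,1,3)$ after verifying that $X$ avoids the singular locus of the weighted projective space) and the positivity of the generator's square, which relies on the Hodge index theorem applied to the Picard lattice rather than a direct ampleness argument for $D$ itself. Everything else is a straightforward application of projection formula, the decomposition of $\pi_*\cO_X$, and lattice arithmetic.
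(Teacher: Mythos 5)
Your proof is correct and follows essentially the same route as the paper, which likewise deduces smoothness from smoothness of $B$, cites the standard theory of double covers for $K_X \cong \cO_X$ and $H^1(X,\cO_X)=0$, and then pins down the rank-one even lattice from $H^2 = 2$. The only cosmetic difference is that the paper obtains $H^2=2$ via adjunction applied to the genus-$2$ curve $H$ rather than the projection formula, and your final lattice-arithmetic step simply spells out details the paper leaves implicit.
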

\begin{proof}
    This is a very classical result. As $B$ is smooth, so is $X$.
    For a proof for $K_X \cong \mathcal{O}_{X}$ and $H^1(X,\cO_{X})=0$ see~\cite[Section V.22]{Bar2015}.
    If  we consider $H$ as above, then the adjunction formula on $X$ yields $H^2=2g(H)-2=2$.
    As $\Pic X$ is an even lattice and by hypothesis, it is of rank~$1$, thus, we conclude that $\Pic X\cong \langle 2 \rangle$.
\end{proof}

\begin{remark}
    Using the Veronese embedding of weighted degree~3 of $\P (1,1,1,3)$ inside $\P^{10}$, one can view $X$ as a surface in $\P^{10}$.
\end{remark}

\begin{example}
    \label{example:k3-with-rank-Pic=1}
    The first example of a K3 surface given by a double cover of $\P^2$ and having Picard number~$1$ was given in~\cite{EJ08}.
    Nowadays, it is easy to construct many more examples, see e.g.~\cite[Example 3.4]{FNP24}.
\end{example}

\subsection{A branched double cover of \texorpdfstring{$\P^1 \times \P^1$}{P1 x P1}}
\label{subsection:branched-P1xP1}

Let $B$ be a smooth curve of bidegree $(4,4)$ in $\P^1\times \P^1$ and define $X$ to be the double cover of $\P^1\times \P^1$ ramified above $B$.

\begin{lemma}
    $X$ is a K3 surface.
\end{lemma}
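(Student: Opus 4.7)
The plan is to verify the three defining properties of a K3 surface in turn: smoothness of $X$, triviality of the canonical bundle $K_X$, and vanishing of $H^1(X,\mathcal{O}_X)$. All three follow from the standard theory of double covers, applied to $\pi: X \to Y := \P^1 \times \P^1$ ramified along the smooth curve $B$ of bidegree $(4,4)$, in direct analogy with the argument already sketched in \cref{p:DoublePlaneK3}.

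First, I would realise $X$ explicitly as the hypersurface
\[
w^2 = f(x_0,x_1,y_0,y_1)
\]
inside a suitable total space of the line bundle $L = \mathcal{O}_Y(2,2)$ (so that $L^{\otimes 2} = \mathcal{O}_Y(B)$), where $f$ is the bihomogeneous polynomial of bidegree $(4,4)$ cutting out $B$. Smoothness of $X$ away from the ramification locus is immediate from $\pi$ being \'etale there, while smoothness along the ramification divisor reduces to smoothness of $B$ inside $Y$: in local coordinates where $B = \{f = 0\}$ with $df \neq 0$, the equation $w^2 = f$ defines a smooth hypersurface. This establishes (i).

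Second, for the canonical bundle I would apply the ramification formula for double covers,
\[
K_X = \pi^*\bigl(K_Y \otimes L\bigr).
\]
Since $K_Y = \mathcal{O}_Y(-2,-2)$ and $L = \mathcal{O}_Y(2,2)$, the bundle $K_Y \otimes L$ is trivial, so $K_X \cong \mathcal{O}_X$.

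Third, for $H^1(X,\mathcal{O}_X)$ I would use the direct image decomposition $\pi_*\mathcal{O}_X \cong \mathcal{O}_Y \oplus L^{-1}$, together with the fact that $\pi$ is finite (so the Leray spectral sequence degenerates) to obtain
\[
H^1(X,\mathcal{O}_X) \cong H^1(Y,\mathcal{O}_Y) \oplus H^1\bigl(Y,\mathcal{O}_Y(-2,-2)\bigr).
\]
The first summand vanishes by K\"unneth since $H^1(\P^1,\mathcal{O})=0$. For the second, K\"unneth gives
\[
H^1\bigl(Y,\mathcal{O}(-2,-2)\bigr) \cong \bigl(H^1(\P^1,\mathcal{O}(-2)) \otimes H^0(\P^1,\mathcal{O}(-2))\bigr) \oplus \bigl(H^0(\P^1,\mathcal{O}(-2)) \otimes H^1(\P^1,\mathcal{O}(-2))\bigr),
\]
and both summands vanish because $H^0(\P^1,\mathcal{O}(-2)) = 0$. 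Combining the three steps yields that $X$ is a K3 surface. The only mildly delicate point is keeping straight the convention $L^{\otimes 2} \cong \mathcal{O}_Y(B)$ so that the ramification formula produces $K_Y\otimes L$ and the direct image gives $\mathcal{O}_Y \oplus L^{-1}$; beyond that, every step is a direct K\"unneth or adjunction computation, so I do not anticipate a serious obstacle.
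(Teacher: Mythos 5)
Your proof is correct and follows essentially the same route as the paper: the paper verifies smoothness via smoothness of the branch curve and then cites the standard theory of double covers in Barth--Hulek--Peters--Van de Ven (Sections V.22--23) for $K_X \cong \mathcal{O}_X$ and $H^1(X,\mathcal{O}_X)=0$, which are exactly the ramification formula $K_X = \pi^*(K_Y \otimes L)$ and the splitting $\pi_*\mathcal{O}_X \cong \mathcal{O}_Y \oplus L^{-1}$ that you spell out. Your K\"unneth computations are accurate, so the only difference is that you make explicit what the paper leaves to the reference.
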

\begin{proof}
    To prove that $X$ is a K3 surface we need to show that $X$ is smooth, its canonical divisor is trivial, and $H^1(X,\cO_X)=0$. 
    First, notice that $\P^1\times \P^1$ is smooth and that the singularities of a double cover of a smooth variety are determined by the singularities of the branch locus.
    As $B$ is smooth, this means that $X$ is smooth as well.
    To show that $K_X \cong \mathcal{O}_X$ and $H^1(X,\cO_X)=0$ one can use the theory of double coverings developed in \cite[Section V.22]{Bar2015}; 
    in particular, see the beginning of~\cite[Section V.23]{Bar2015}.
\end{proof}

Let $\pi\colon X\to \P^1\times \P^1$ be the two-to-one projection and let $\iota\in \Aut X$ denote the induced involution.
We now describe $\Pic X$ of $X$.

We start by recalling that $\P^1\times \P^1$ is a del Pezzo surface of degree $8$ and its Picard group is generated by the equivalence classes of two line bundles $\mathcal{O}(D_1), \mathcal{O} (D_2)$, where $D_1, D_2$ are divisors defined by
$D_1 := \{ p_1 \} \times \P^1 $ and $D_2 := \P^1 \times \{ p_2 \}$ for some points $p_1,p_2\in\P^1$.
It is easy to compute the intersection numbers $D_1^2=D_2^2=0$ and $D_1.D_2=1$.
For $i=1,2$, denote by $E_i\in \text{Cl} (X)$ the pull-back of $D_i$.
\begin{lemma}\label{l:U2}
    The sublattice of $\ \Cl X \cong \Pic X$ generated by $E_1$ and $E_2$ is isomorphic to $U(2)=[0\; 2\; 0]$.
\end{lemma}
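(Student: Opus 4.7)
The plan is to compute the Gram matrix of the pairing on the sublattice $\langle E_1, E_2 \rangle \subset \Cl X$ with respect to the given generators, and show it equals $\begin{pmatrix} 0 & 2 \\ 2 & 0 \end{pmatrix}$, which by \cref{r:Gram} is exactly the lattice $U(2) = [0\;2\;0]$.

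The key tool is the pullback formula for intersection numbers under a finite surjective morphism of smooth projective surfaces: if $\pi: X \to Y$ has degree $d$, then $\pi^* D \cdot \pi^* D' = d \, (D \cdot D')$ for any divisors $D, D'$ on $Y$. In our setting $\pi: X \to \P^1 \times \P^1$ is a double cover, so $d = 2$. Applying this to $E_i = \pi^* D_i$ and using the intersection numbers $D_1^2 = D_2^2 = 0$ and $D_1 \cdot D_2 = 1$ recalled just before the lemma, I immediately get
\[
E_1^2 = 2 \cdot D_1^2 = 0, \qquad E_2^2 = 2 \cdot D_2^2 = 0, \qquad E_1 \cdot E_2 = 2 \cdot (D_1 \cdot D_2) = 2.
\]
This is precisely the Gram matrix defining $U(2)$.

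To conclude that the sublattice generated by $E_1, E_2$ is isomorphic to $U(2)$ and not a proper quotient, I note that linear independence of $E_1$ and $E_2$ in $\Cl X$ follows from the fact that the Gram matrix has nonzero determinant $-4$: any nontrivial relation $aE_1 + bE_2 = 0$ would force $a = b = 0$ after pairing with $E_1$ and $E_2$ in turn. Since $\Cl X$ is torsion-free (being a sublattice of $H^2(X, \Z)$ via the exponential sequence, as recalled in the Background section), this identifies $\langle E_1, E_2 \rangle$ with the abstract lattice $U(2)$.

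There is no real obstacle here: the only substantive input is the pullback formula for intersection numbers under finite morphisms, which is standard and can be cited from \cite[Section V.22]{Bar2015} alongside the rest of the double-cover theory being used in this subsection. The one small thing to be careful about is distinguishing the pullback of a divisor $\pi^* D_i$ (which is a divisor, and is what the authors call $E_i$) from the set-theoretic preimage; since $D_i$ is disjoint from the branch locus for a generic choice of $p_i$, this subtlety does not cause any trouble.
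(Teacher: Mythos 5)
Your proof is correct, but it reaches the Gram matrix by a different route than the paper. You invoke the general pullback formula $\pi^*D\cdot\pi^*D'=\deg(\pi)\,(D\cdot D')$ for the degree-$2$ cover and read off all three intersection numbers at once. The paper instead argues geometrically curve by curve: $E_i$ is a double cover of $D_i\cong\P^1$ branched at the $4$ points of $D_i\cap B$, hence an elliptic curve (the paper writes $g(E_i)=0$, which is a typo for $g(E_i)=1$), so adjunction gives $E_i^2=2g(E_i)-2=0$; and $E_1\cdot E_2=2$ because the single point $D_1\cap D_2$, chosen off $B$, has two preimages. Your approach is shorter and needs only one standard fact, while the paper's computation of the genus of $E_i$ is reused later (the analogous genus argument for $R$ feeds into the lemma $R=2E_1+2E_2$), so the authors get that bookkeeping for free. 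Your extra paragraph on linear independence via the nonvanishing determinant of the Gram matrix is a reasonable addition that the paper leaves implicit. One small inaccuracy in your closing remark: $D_i$ is \emph{not} disjoint from the branch locus --- it meets $B$ in $4$ points; the relevant fact is only that $D_i$ is not \emph{contained} in $B$, which is what guarantees $\pi^*D_i$ is the reduced preimage $E_i$ with multiplicity one. This does not affect the validity of the argument.
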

\begin{proof}
    The curve $E_i$ is the double cover of the line $D_i$ ramified above above $D_i\cap B$. 
    As $B$ is smooth of bidegree~$(4,4)$, for a generic choice of $D_i$ one has $\#(D_i\cap B)=4$.
    This means that $E_i$ is an elliptic curve, i.e., $g(E_i)=0$ and hence, using the adjunction formula on $X$, it follows that $E_i^2=0$.

    The intersection points in $E_1\cap E_2$ are the pullback of the intersection points of $D_1\cap D_2 = \{ (p_1,p_2) \}$.
    For a generic choice of $p_1, p_2$, the point  $(p_1,p_2)$ does not lie on $B$.
    It follows that $E_1.E_2=2$, concluding the proof.
\end{proof}

If $R\subset X$ represents the pre-image $\pi^{-1}(B)$, then by construction the fixed locus of $\iota$ is $\Fix \iota = R$.
\begin{lemma}
    The curve $R$ has genus $g(R)=9$.
\end{lemma}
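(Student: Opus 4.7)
The plan is to compute $g(R)$ via adjunction, following the same strategy used for $E_i$ in \cref{l:U2}. I would first observe that $\pi$ restricts to an isomorphism $R \xrightarrow{\sim} B$ (since $\pi$ is a finite morphism of degree $2$ that is totally ramified along $R$, the restriction $\pi|_R$ is bijective onto $B$, and both curves are smooth). Hence it suffices to compute $g(B)$, or equivalently to compute $R^2$ on the K3 surface $X$ and apply adjunction.

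The more intrinsic route is to work directly on $X$. Since $\pi$ is a double cover branched along $B$, the ramification and branch divisors satisfy $\pi^* B \sim 2R$ in $\Cl X$. With $B \sim 4 D_1 + 4 D_2$ on $\P^1 \times \P^1$, we get $R \sim 2 E_1 + 2 E_2$, where $E_i = \pi^* D_i$ as in \cref{l:U2}. Using $E_1^2 = E_2^2 = 0$ and $E_1 \cdot E_2 = 2$ established there, I would compute
\begin{equation*}
    R^2 = (2E_1 + 2E_2)^2 = 8 E_1 \cdot E_2 = 16.
\end{equation*}
Since $X$ is a K3 surface, $K_X \cong \mathcal{O}_X$, and the adjunction formula on $X$ gives
\begin{equation*}
    2 g(R) - 2 = R^2 + R \cdot K_X = 16,
\end{equation*}
so $g(R) = 9$.

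As a sanity check, one can redo the calculation downstairs: $B$ is a smooth curve of bidegree $(4,4)$ in $\P^1 \times \P^1$, and the adjunction formula there (with $K_{\P^1 \times \P^1} \sim -2D_1 - 2D_2$) gives $2g(B) - 2 = B \cdot (B + K_{\P^1 \times \P^1}) = (4,4) \cdot (2,2) = 16$, so $g(B) = 9$, matching the value obtained on $X$. There is no real obstacle here; the only points to be careful about are that $\pi|_R$ is an isomorphism (not just $2{:}1$) and that the linear equivalence $\pi^* B \sim 2R$ carries a factor of $2$ coming from the ramification, which is what makes $R^2 = \tfrac{1}{2} B^2$ rather than $B^2$.
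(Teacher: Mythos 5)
Your proof is correct, but it takes a genuinely different route from the paper. The paper simply notes that $\pi|_R\colon R \to B$ is an isomorphism and then applies the bidegree--genus formula $g(B)=(4-1)(4-1)=9$ for a smooth curve of bidegree $(4,4)$ in $\P^1\times\P^1$ --- two lines, no intersection theory on $X$ needed. You instead work upstairs: you invoke the standard ramification relation $\pi^*B = 2R$ to get $R = 2E_1+2E_2$ in $\Cl X$, compute $R^2=16$ from the intersection numbers of \cref{l:U2}, and apply adjunction on the K3. Both are valid; your computation downstairs of $2g(B)-2 = (4,4)\cdot(2,2)=16$ is just the adjunction-formula derivation of the bidegree--genus formula the paper quotes. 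What your route buys is that it establishes $R=2E_1+2E_2$ as a byproduct --- a fact the paper proves only in the \emph{next} lemma, and by a roundabout argument (it uses $g(R)=9$ to get $R^2=16$, shows the $3\times 3$ Gram matrix of $E_1,E_2,R$ is degenerate, and reads off the coefficients), so your derivation via $\pi^*B=2R$ would actually streamline that subsequent lemma. What it costs is reliance on the ramification formula $\pi^*B=2R$ and on \cref{l:U2}, neither of which the paper's one-line proof needs. One small caveat: since you divide the relation $4E_1+4E_2=2R$ by $2$, you are implicitly using that $\Cl X$ is torsion-free, which holds for a K3 surface but is worth saying.
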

\begin{proof}
    As $B$ is the branch locus of $\pi$, it is isomorphic to its preimage  $R$, hence $g(B)=g(R)$
    The genus of $B$ is determined by the bidegree-genus formula, 
    $g(B)=(4-1)(4-1)=9$.
\end{proof}

\begin{lemma}
    We have $R=2E_1+2E_2$ in $\Cl X$.
\end{lemma}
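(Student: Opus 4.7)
The plan is to compute the class of $R$ by pulling back the class of $B$ under $\pi$ and accounting for the ramification factor of $2$.

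First, I would record that $B$, being a smooth curve of bidegree $(4,4)$ in $\P^1\times\P^1$, represents the class $4D_1 + 4D_2 \in \Cl(\P^1\times\P^1)$ by definition of bidegree and the identification of $\Cl(\P^1\times\P^1)$ with $\Z\langle D_1\rangle \oplus \Z\langle D_2\rangle$ recalled just before \cref{l:U2}. Taking pullbacks under $\pi$ and using $\pi^*D_i = E_i$ then gives the linear equivalence
\[
\pi^*B \sim 4E_1 + 4E_2 \quad \text{in } \Cl X.
\]

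The next step, which is the only nontrivial point, is to identify $\pi^*B$ with $2R$ as a divisor (not merely set-theoretically). Since $\pi\colon X \to \P^1\times\P^1$ is a double cover branched exactly along the smooth curve $B$, the map $\pi$ is \'etale of degree $2$ outside $B$ and ramified to order $2$ along $R = \pi^{-1}(B)$, where $R \cong B$. In a local equation $w^2 = f$ with $f=0$ a local equation of $B$, the pullback of the Cartier divisor $\{f=0\}$ to $X$ is $\{w^2 = 0\} = 2\{w=0\}$, which is precisely $2R$ locally. Thus $\pi^*B = 2R$ as divisors on $X$, hence as classes in $\Cl X$.

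Combining the two steps yields $2R \sim 4E_1 + 4E_2$ in $\Cl X$. Since $\Cl X \cong \Pic X$ is the underlying group of an even lattice (in particular, torsion-free), we may divide by $2$ to conclude $R = 2E_1 + 2E_2$ in $\Cl X$, as claimed. I do not foresee a real obstacle here; the only point that requires a moment of care is the multiplicity $2$ appearing in $\pi^*B = 2R$, which is exactly the statement that $\pi$ is ramified to order two along $R$.
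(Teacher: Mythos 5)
Your proof is correct, but it takes a genuinely different route from the paper's. The paper argues purely numerically: it computes $R^2=16$ from $g(R)=9$ via adjunction, computes $E_i.R=4$ from the bidegree of $B$, observes that the Gram matrix of $E_1,E_2,R$ has determinant zero so that $R$ lies in the (nondegenerate) sublattice $\langle E_1,E_2\rangle$, and then reads off the coefficients from the intersection numbers. You instead use functoriality of the pullback on divisor classes: $\pi^*B=\pi^*(4D_1+4D_2)=4E_1+4E_2$, together with the local computation $\pi^*\{f=0\}=\{w^2=0\}=2R$ along the branch divisor, and then divide by $2$ using torsion-freeness of $\Cl X$. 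Both arguments are sound. Your approach is more structural and shorter --- it avoids the genus computation for $R$ entirely and does not rely on nondegeneracy of the intersection form --- while the paper's approach has the mild advantage of reusing only the intersection numbers already computed in the surrounding lemmas and of not requiring one to justify the ramification multiplicity in $\pi^*B=2R$ (which you correctly identify as the one point needing care, and which your local model $w^2=f$ handles properly). One small remark: when you divide by $2$, the cleanest justification is that $\Cl X$ embeds into $H^2(X,\Z)$, which is torsion-free for a K3 surface; this is exactly the fact you invoke, so no gap remains.
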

\begin{proof}
    As $R$ has genus $9$, its self-intersection is $R^2=2 \cdot 9 -2=16$.
    We have already seen that the intersection $E_i\cap B$ contains $4$ points, because $B$ has bidegree $(4,4)$.
    As $E_i$ and $R$ are the pull-backs of $D_i$ and $B$ respectively,  and $B$ is the branch locus of $\pi$, it follows that $E_i.R=4$.
    Hence the sublattice generated by $E_1,E_2$ and $R$ has Gram matrix
    \[
    \begin{pmatrix}
        0 & 2 & 4\\
        2 & 0 & 4\\
        4 & 4 & 16
    \end{pmatrix}\; .
    \]
    This matrix has the determinant equal to $0$ and hence $R\in \langle E_1, E_2\rangle$.
    Looking at the intersection numbers it is immediate to see that $R=2E_1+2E_2$.
\end{proof}
\begin{proposition}
    Assume that $\rho (X)=2$,
    then $\Cl X = \langle E_1, E_2 \rangle \cong U(2) = [0\; 2\; 0]$.
\end{proposition}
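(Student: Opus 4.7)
The plan is to reduce the proposition to ruling out $\Cl X \cong U$, and then to derive a contradiction from any such isomorphism by geometrically interpreting a would-be class ``$E_1/2$'' as the fibre class of an elliptic fibration that, combined with the existing ruling from the second projection $p_2 := \operatorname{pr}_2 \circ \pi$, would produce a birational map from the K3 surface $X$ to the rational surface $\P^1 \times \P^1$.

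First I would use the discriminant to bound the index $d := [\Cl X : \langle E_1, E_2\rangle]$. Since $\langle E_1, E_2\rangle \cong U(2)$ has Gram determinant $-4$, the identity $\det(\Cl X) = -4/d^2$ together with integrality of the Gram determinant of $\Cl X$ forces $d \in \{1, 2\}$. An index-$2$ even overlattice of $U(2)$ in signature $(1, 1)$ is unique up to isomorphism and equal to $U$, so the proposition reduces to showing $\Cl X \not\cong U$. Under the contrary assumption $\Cl X \cong U$, a short computation in $\Cl X/\langle E_1, E_2\rangle \cong \Z/2$, using evenness, rules out the coset represented by $(E_1 + E_2)/2$ (which would have square $1$) and leaves only the cosets represented by $E_1/2$ or $E_2/2$. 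By the symmetry between the two rulings I may assume the existence of $v \in \Cl X$ with $2v = E_1$, so that $v^2 = 0$, $v \cdot E_1 = 0$ and $v \cdot E_2 = 1$.

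Next I would show that $v$ is effective and defines an elliptic fibration. By Riemann--Roch, $h^0(v) + h^0(-v) \geq \chi(v) = 2$, and $-v$ cannot be effective because $(-v) \cdot E_2 = -1$ contradicts the base-point-freeness of $|E_2|$. Hence $v$ is effective. Positivity of intersection with the base-point-free pencils $|E_1|$ and $|E_2|$ then forces the coordinates of any effective class in $\Cl X \cong U$, expressed in the basis $v, E_2$, to be non-negative; a direct enumeration of $(-2)$-classes shows that no such class is effective, and hence $v$ is nef. The standard theory of isotropic nef classes on K3 surfaces (see, e.g., \cite{Huybrechts2016}) then guarantees that the primitive class $v$ yields a base-point-free linear system $|v|$ and an induced elliptic fibration $q : X \to \P^1$ whose general fibre has class $v$.

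For the final step I would form the morphism $(q, p_2) : X \to \P^1 \times \P^1$; its general fibre is the intersection $q^{-1}(t) \cap p_2^{-1}(y)$, consisting of $v \cdot E_2 = 1$ point, so $(q, p_2)$ is birational. This contradicts the fact that $X$ is a K3 surface (of Kodaira dimension $0$) while $\P^1 \times \P^1$ is rational. The symmetric case $2v = E_2$ is handled identically, after swapping the roles of $p_1$ and $p_2$. I expect the most delicate point to be the passage from ``$v$ is effective and primitive with $v^2 = 0$'' to ``$v$ defines an elliptic fibration,'' which requires excluding any obstructing effective $(-2)$-curve; this step is made tractable only because $\Cl X \cong U$ is so small that all relevant intersection numbers can be enumerated by hand.
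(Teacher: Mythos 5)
Your argument is correct, but it takes a genuinely different route from the paper's. The paper rules out $\Cl X \cong U$ by exploiting the covering involution $\iota$: since $\Fix\iota$ is a curve of genus $9$, $\iota$ is non-symplectic, Nikulin's classification forces the $\iota^*$-fixed sublattice to be exactly $U(2)=\langle E_1,E_2\rangle$, and the unique effectiveness-preserving embedding $U(2)\hookrightarrow U$ (namely $E_1=F_1$, $E_2=2F_2$) would make $\iota^*$ fix all of $U$ --- a contradiction. You instead never use $\iota$ beyond the intersection numbers of $E_1,E_2$: after the same lattice-theoretic reduction to the coset analysis (which is fine --- the coset of $(E_1+E_2)/2$ has odd square, and the two remaining cases are symmetric), you produce the half-class $v$ with $2v=E_1$, show it is effective and nef via Riemann--Roch and the nefness of the base-point-free classes $E_1,E_2$, and derive a contradiction from the resulting birational map $(q,p_2)\colon X\to\P^1\times\P^1$. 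This buys self-containedness: no appeal to Nikulin's classification, only standard linear-system theory on K3 surfaces, at the cost of invoking the theory of isotropic nef classes and elliptic fibrations. Two remarks. First, your own enumeration already finishes the proof more quickly: the class $\delta=v-E_2$ satisfies $\delta^2=-2$, so by Riemann--Roch either $\delta$ or $-\delta$ is effective, yet $\delta\cdot E_1=-2<0$ and $(-\delta)\cdot E_2=-1<0$ against the nef classes $E_1,E_2$; this contradiction makes the effectivity of $v$, the elliptic fibration, and the birationality argument unnecessary. Second, "positivity of intersection" should read "non-negativity", and you should record explicitly that $E_1,E_2$ are nef because they are pullbacks of the base-point-free rulings of $\P^1\times\P^1$, since that is the only geometric input your argument needs.
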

\begin{proof}
    \cref{l:U2} tells us that $U(2)$ is a sublattice of $\Cl X$.
    By initial assumption, $\rho (X)=2$ and hence $U(2)$ is a finite-index sublattice of $\Cl X$.
    As $\det U(2)=-4$ we only have two options for $\Cl X$: it is isomorphic to either $U(2)$ or $U$.

    The involution $\iota$ on $X$ has a curve of genus $9$ as fixed locus;
    as symplectic automorphisms of K3 surface only fix finitely many points (see \cite[Proposition 1.2]{Muk88}) we conclude that $\iota$ is non-symplectic. 
    The involution $\iota$ naturally induces a Hodge isometry $\iota^*$ of order two on $\Cl X$.
    The induced involution $\iota^*$ acts as the identity on the sublattice $\langle E_1, E_2 \rangle\subset \Cl X$, as $E_i$ is the double cover of $D_i\subset \P^1\times \P^1$.
    The classification of K3 surfaces with a non-symplectic involution provided by Nikulin in~\cite[\S 4]{Nik83} then tells us that the sublattice of $\Cl X$ fixed by $\iota^*$ is isomorphic to $U(2)$, hence it is exactly $\langle E_1, E_2 \rangle$.

    Assume $\Cl X\cong U$ and let $F_1, F_2$ be two generators of $\Cl X$.
    Without loss of generality we may and do assume that $F_1, F_2$ are effective.
    This means that $F_1^2=F_2^2=0$ and $F_1.F_2=1$. 
    Up to isometries of $U$, there is only one embedding of $U(2)$ into $U$ that preserves effectiveness, namely $E_1=F_1$ and $E_2=2F_2$.
    It follows that $\iota^*$ fixes the whole Picard lattice $U$, contradicting Nikulin's classification.
    We then conclude that $\Cl X \cong U(2)$.
\end{proof}

\begin{remark}\label{r:Embeddings}
    With a complete description of $\Cl X$, it is also possible to understand the ample divisors of~$X$.
    It is easy to see that $U(2)$ does not admit any $-2$-divisor and hence any effective divisor with positive self-intersection is ample, see~\cite[Corollary 8.1.6]{Huybrechts2016}.
    The divisor $F=E_1+E_2$ is effective and has self-intersection $4$, so it is ample. 
    Let $C$ be a (smooth) curve in the linear system $|F|$. Then $C$ has genus~$3$.
    We claim that $C$ is hyperelliptic: indeed it is fixed by the involution $\iota^*$ and $C.R=8$ tells us that $\pi_*(C)$ has genus $0$, i.e., $C$ is a double cover $\P^1$ or, in other words, $C$ is a hyperelliptic curve.
    This means that $|F|$ induces a map $\phi_{F}\colon X\to \P^3$ which is 2:1 onto its image (see~\cite[Remark 2.2.4.ii]{Huybrechts2016}).
    The divisor $2F=2E_1+2E_2=R$ is also effective and with positive self-intersection, and so it is also ample. In fact, it is \emph{very} ample.
    As $g(R)=9$, it induces an embedding of $\phi_{R}\colon X\hookrightarrow\P^9$.
    This embedding can also be described in an explicit way.
    
    Consider the Segre embedding $\sigma \colon \P^1 \times \P^1 \hookrightarrow \P^3$.
    If $x_0, x_1, x_2, x_3$ are the projective coordinate of $\P^3$,
    then the image $V\cong \P^1 \times \P^1$ of $\sigma$ is the quadric defined by $x_0x_3-x_1x_2=0$.
    Let $B'\subset V$ be the push-forward of $B$ in $V$.
    Then $B'\subset \P^3$ is a curve defined by the intersection of $V$ with an hypersurface $S\colon f(x_0,x_1,x_2,x_3)=0$.
    We can then view $X$ as a double cover of $V$ ramified above $B'=S\cap V$, that is, 
    $X$ can be viewed as the following variety in the weighted projective space $\P (1,1,1,1,2)$
    \begin{equation}
    X\cong 
    \begin{cases}
        & x_0x_3-x_1x_2=0\\
        & y^2 = f(x_0,x_1,x_2,x_3)
    \end{cases}
    \subset \P (1,1,1,1,2)
    \end{equation}
    where $x_0,x_1,x_2,x_3$ are the coordinates of weight 1 and $y$ has weight 2.
    This is the map $\phi_{F}$ above.
    The weighted projective space $\P (1,1,1,1,2)$ can be embedded into $\P^{10}$ using the Veronese embedding of (weighted) degree~$2$
    \[ 
    v\colon \P (1,1,1,1,2) \hookrightarrow \P^{10} \; .
    \]
    Under this embedding, the quadric of $\P (1,1,1,1,2)$ defined by $x_0x_3-x_1x_2=0$ is sent to a hyperplane and hence $v(X)$ can be embedded into $\P^9$, that is, we retrieve the embedding $\phi_{R}$.
\end{remark}

\begin{example}
    \label{example:branched-P1xP1-with-small-Picard}
    Let $X$ be the K3 surface obtained as the double cover of $\P^1\times\P^1$ ramified above the curve
    \begin{align*}
        B\colon & 2x_0^3x_1y_0^4 + x_0^4y_0^3y_1 + 2x_0^3x_1y_0^3y_1 + 
                 x0^2x_1^2y_0^3y_1 + x_0x_1^3y_0^3y_1 + 2x_1^4y_0^3y_1 +\\
                & x_0^4y_0^2y_1^2 + 2x_0^3x_1y_0^2y_1^2 + x_0^2x_1^2y_0^2y_1^2 + 
                 2x_1^4y_0^2y_1^2 + x_0^3x_1y_0y_1^3 + x_0^2x_1^2y_0y_1^3 + \\ 
                & x_0x_1^3y_0y_1^3 + x_1^4y_0y_1^3 + 2x_0^4y_1^4 + 2x_0^3x_1y_1^4 \subset \P^1\times\P^1
    \end{align*}
    of bidegree $(4,4)$,
    where $x_0,x_1$ and $y_0,y_1$ are the projective coordinates of the two copies of $\P^1$, respectively.
    By reducing modulo $3$ and computing the number of points over $\F_{3^n}$ for $n=1,\dots,9$, 
    one can prove that $X$ has Picard number $2$.
    See the ancillary \texttt{Magma} file to see how we obtained the equation defining $B$ and the proof that $\rho (X)=2$.
    In the same file, we also provide equations of $X$ as a surface in $\P^{10}$ and $\P^9$, using the embeddings presented in \cref{r:Embeddings}.
\end{example}

\section{Stable bundles on K3 surfaces with small Picard group}
\label{section:stable-bundle-construction}

We will now put everything together to construct several examples of stable bundles on K3 surfaces that are invariant under a holomorphic and an anti-holomorphic involution.
In \cref{subsection:bundles-on-cp2} and \cref{subsection:P1xP1-examples}, we will construct real stable bundles on $\mathbb{P}^2$ and $\mathbb{P}^1 \times \mathbb{P}^1$, respectively.
The pullbacks of these bundles to suitable branched double covers will then give stable bundles on K3 surfaces.
Last, in \cref{subsection:example-on-quartic}, we will write down a real stable bundle on a K3 surface directly.
This will \emph{not} be invariant under a holomorphic involution of the K3 surface, but by taking the direct sum with its pullback one obtains a polystable bundle that is invariant under the holomorphic involution.

\subsection{Examples of bundles on the branched double cover of \texorpdfstring{$\P^2$}{P2}}
\label{subsection:bundles-on-cp2}

The double cover $\pi: X \rightarrow \P^2$ branched over a smooth sextic $f$ is a classical example of a K3 surface that was reviewed in \cref{subsection:branched-P2}.
We have the holomorphic involution $\iota:X \rightarrow X$ swapping the sheets of the branched cover.
If the sextic $f$ is real, then there exists a lift $\sigma:X \rightarrow X$ of the complex conjugation $\P^2 \rightarrow \P^2$.
Importantly, one can choose real sextics so that $\Pic(X)= \pi ^*(\Pic (\P^2))$, this was \cref{example:k3-with-rank-Pic=1}.
For the rest of the section, we assume this is the case.
In this case, real stable bundles on $\P^2$ pull back to real stable bundles on $X$ by \cref{proposition:stable-on-pullback}.

Knowing all this, we will now construct stable bundles on $\P^2$.
Each of these examples corresponds to a stable bundle on a branched double cover of $\P^2$ with the property that $\Pic(X)= \pi ^*(\Pic (\P^2))$.

That said, a whole lot is known about stable bundles on $\P^2$, see \cite{Okonek2011} for an overview of the field.
In \cite[Section 2.2]{Wang1993}, even the moduli space of stable bundles \emph{with real structure} on $\P^2$ was identified in the case of rank $2$ and $c_1=0, c_2=2$.

In this section, we present two examples:
the cotangent bundle of $\P^2$, which is the only rigid stable bundle of rank $2$ on $\P^2$ and
another (non-rigid) rank $2$ bundle.

\subsubsection{The pullback of the cotangent bundle of \texorpdfstring{$\P^2$}{P2}}

The Euler sequence is the short exact sequence
\begin{align}
\label{equation:euler-sequence}
    0
    \rightarrow
    T^* \P^2
    \rightarrow
    \mathcal{O}(-1)^{\oplus 3}
    \rightarrow
    \mathcal{O}(0)
    \rightarrow
    0,
\end{align}
so $T^* \P^2$ is a kernel monad bundle.
The differential of the complex conjugation $\P^2 \rightarrow \P^2$ defines a real structure on $T^* \P^2$.
It is easy to check that $T^* \P^2$ is stable on $\mathbb{P}^2$, so by \cref{proposition:donaldson-kronheimer-stable-pullbacks} we have that $\pi^*(T^* \mathbb{P}^2)$ is also stable.
By construction, it is invariant under the holomorphic and anti-holomorphic involution on~$X$.
It is also easy to check that it is infinitesimally rigid:
this follows from \cref{theorem:single-point-moduli} together with the smoothness of the moduli space on a K3 surface.
Up to twisting by the polarisation, it is the \emph{unique} stable and infinitesimally rigid bundle of rank $2$ on $X$.
This proves the first part of the first point in \cref{theorem:main-theorem}.

\begin{lemma}
\label{lemma:unique-stable-rk2-bundle-up-to-twists}
    Every infinitesimally rigid stable bundle $E \rightarrow X$ of rank $2$ is isomorphic to $\pi^*(T^* \mathbb{P}^2)(k)$ for some $k \in \mathbb{Z}$.
\end{lemma}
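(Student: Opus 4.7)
The plan is to combine a Chern-class constraint coming from infinitesimal rigidity with the uniqueness statement of \cref{theorem:single-point-moduli}, and to then identify the unique element of the relevant moduli space as a twist of $\pi^*(T^*\P^2)$.

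The first step is to write $c_1(E) = mH$ for some $m \in \Z$, which is possible because by assumption $\Pic X = \pi^*\Pic\P^2$ is generated by $H := \pi^*\mathcal{O}_{\P^2}(1)$ with $H^2 = 2$. Next, I would use that on a K3 surface the moduli space $\mathcal{M}_X^{s}(2, c_1(E), c_2(E))$ is smooth and of the expected dimension $4c_2(E) - c_1(E)^2 - 6$ (see \cref{subsection:moudli-of-ss-sheaves}); combined with infinitesimal rigidity of $[E]$, this forces
\[
4c_2(E) - 2m^2 - 6 = 0,
\]
so $c_2(E) = (m^2+3)/2$ and in particular $m$ must be odd. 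Set $k := (m+3)/2 \in \Z$.

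Then I would verify that $\pi^*(T^*\P^2)(k)$ realises the same Chern classes. Using the Euler sequence \cref{equation:euler-sequence} together with \cref{proposition:chern-classes-of-monads} gives the Chern classes of $T^*\P^2$; pulling back and twisting by $\mathcal{O}_X(kH)$ then yields $c_1(\pi^*(T^*\P^2)(k)) = (2k-3)H = mH$ and $c_2(\pi^*(T^*\P^2)(k)) = (m^2+3)/2$. The cotangent bundle $T^*\P^2$ is classically $\mu$-stable on $\P^2$, so \cref{proposition:donaldson-kronheimer-stable-pullbacks} implies that $\pi^*(T^*\P^2)$ is stable on $X$; since stability is preserved under twisting by a line bundle, so is $\pi^*(T^*\P^2)(k)$.

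The final step is to observe that both $E$ and $\pi^*(T^*\P^2)(k)$ lie in the non-empty moduli space $\mathcal{M}_X^{s}(2, mH, (m^2+3)/2)$, whose expected dimension is zero, so by \cref{theorem:single-point-moduli} this moduli space consists of a single point and $E \cong \pi^*(T^*\P^2)(k)$. The step I expect to be the main obstacle is the translation of the abstract rigidity hypothesis into the numerical equation $4c_2 - c_1^2 = 6$, which depends on the smoothness and equidimensionality of the moduli on a K3; once this Chern-class constraint is in hand, the rest of the argument reduces to routine Chern-class arithmetic and direct application of the cited results.
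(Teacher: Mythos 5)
Your proposal is correct and follows essentially the same route as the paper's proof: use smoothness and the expected-dimension formula on a K3 to turn rigidity into the equation $4c_2(E)-2m^2-6=0$, solve it (forcing $c_1(E)$ odd), match the resulting Chern classes with those of a twist of $\pi^*(T^*\P^2)$, and conclude via \cref{theorem:single-point-moduli} that the zero-dimensional moduli space is a single point. The only difference is cosmetic: you twist on $X$ after pulling back and parametrise by $k=(m+3)/2$, whereas the paper twists on $\P^2$ before pulling back, writing the bundle as $\pi^*(T^*\P^2(k+2))$; the two parametrisations agree.
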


\begin{proof}
    Let $c_1(E)=\pi^*\mathcal{O}(x)$ and $c_2(E)=y$.
    By \cref{subsection:moudli-of-ss-sheaves}, we have
    \[
    0
    =\dim \mathcal{M}_X^{s} (2, c_1(E), c_2(E))
    =\text{exp.dim} (2, c_1(E), c_2(E))
    =4c_2(E)-c_1(E)^2-6
    =
    4y-2x^2-6,
    \]
    using $c_1(\pi^*\mathcal{O}(1)) \cup c_1(\pi^*\mathcal{O}(1))=2$ on $X$.
    One easily checks that all solutions of this equation are of the form $x=2k+1$ and $y=2+2k+2k^2$ for $k \in \mathbb{Z}$.
    These are precisely the Chern classes of the bundle $\pi^*(T^* \mathbb{P}^2(k+2))$, i.e.
    \[
    c_1(\pi^* T^* \mathbb{P}^2(k+2))
    =
    \pi^*\mathcal{O}(2k+1)
    \text{ and }
    c_2(\pi^* T^* \mathbb{P}^2(k+2))
    =
    2+2k+2k^2,    
    \]
    which one quickly verifies using $c_1(\pi^*(T^* \mathbb{P}^2))=\pi^*\mathcal{O}(-3)$ and $c_2(\pi^*(T^* \mathbb{P}^2))=6$.
    
    Thus, whenever the dimension of $\mathcal{M}_X^{s} (2, c_1(E), c_2(E))$ is zero, it contains a twist of $\pi^*(T^* \mathbb{P}^2)$.
    By \cref{theorem:single-point-moduli}, this is the only point in the moduli space, hence $E$ is isomorphic to this twist.
\end{proof}

\subsubsection{Another rank \texorpdfstring{$2$}{2} example}

Here is a generalisation of the previous example.
It is still of rank $2$, so by \cref{lemma:unique-stable-rk2-bundle-up-to-twists} it cannot be infinitesimally rigid.
For $s>0$ consider $K$ defined via
\begin{align}
\label{equation:cotangent-bundle-generalisation-complex}
K:=Ker (a) \rightarrow \mathcal{O}(0)^{\oplus 3}
\overset{a=(x^s \,\,\, y^s \,\,\, z^s)}{\joinrel\relbar\joinrel\relbar\joinrel\relbar\joinrel\relbar\joinrel\relbar\joinrel\relbar\joinrel\relbar\joinrel\relbar\joinrel\relbar\joinrel\rightarrow}
\mathcal{O}(s).
\end{align}
Note that for $s=1$ we have that $K \cong T^* \P^2(1)$, which can be seen from tensoring \cref{equation:cotangent-bundle-generalisation-complex} by $\mathcal{O}(1)$ and obtaining \cref{equation:euler-sequence}.
We then have the following proposition, which is the second part of the first point in \cref{theorem:main-theorem}.

\begin{proposition}
    The bundle $K$ defined in \cref{equation:cotangent-bundle-generalisation-complex} is real and stable.
\end{proposition}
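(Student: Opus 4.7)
The statement has two parts: reality and stability, and I would handle them independently.

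\emph{Reality.} The monad map $a = (x^s,\,y^s,\,z^s)$ is defined by monomials with integer (in particular real) coefficients, so part~(1) of \cref{proposition:real-structure} immediately endows $K = \Ker(a)$ with a real structure $\hat{\sigma}$ covering complex conjugation on $\P^2$.

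\emph{Stability via the Generalised Hoppe criterion.} From \cref{proposition:chern-classes-of-monads} applied to the defining sequence $0 \to K \to \cO^{\oplus 3} \to \cO(s) \to 0$, one reads off $\rank K = 2$ and $c_1(K) = -s$, so $\mu(K) = -s/2$. Because $\Pic(\P^2) = \Z$ and $K$ has rank $2$, part~(1) of \cref{theorem:hoppe-criterion} reduces stability to the single vanishing statement
\[
H^0\bigl(\P^2,\,K \otimes \cO(k)\bigr) = 0 \qquad \text{for every integer } k \leq s/2.
\]
Every such $k$ satisfies $k < s$, so it suffices to prove the vanishing in the range $k < s$.

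For this I would invoke the Koszul resolution of $K$. Since $(x^s,\,y^s,\,z^s)$ has empty common vanishing locus in $\P^2$, it is a regular sequence, and the twist of its Koszul complex by $\cO(s)$ splits as the defining sequence above together with the resolution
\[
0 \to \cO(-2s) \to \cO(-s)^{\oplus 3} \to K \to 0.
\]
Twisting this by $\cO(k)$ and passing to the long exact sequence in cohomology, the standard vanishings $H^0(\P^2,\cO(k-s)) = 0$ for $k < s$ and $H^1(\P^2, \cO(k-2s)) = 0$ (which holds on $\P^2$ for every integer) force $H^0(\P^2, K(k)) = 0$ throughout the required range, yielding stability.

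The only conceptual ingredient is the Koszul exactness, i.e.\ the fact that $(x^s, y^s, z^s)$ is a regular sequence on $\P^2$; this is the main (and standard) step. The rest is routine line-bundle cohomology on projective space and the bookkeeping of slopes in the Hoppe criterion.
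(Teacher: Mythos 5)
Your proof is correct, and it reaches the same cohomology-vanishing computation as the paper but by a different route. The paper does not work with $K$ directly: it dualises the defining sequence to get the quotient presentation $0 \to \cO(-s) \to \cO^{\oplus 3} \to K^* \to 0$, checks the Hoppe condition $H^0(K^*(k))=0$ for $k \le -s/2$ immediately from the resulting long exact sequence (no Koszul input needed, since both outer terms vanish for degree reasons), and then transfers stability back to $K$ by citing Takemoto's result that a bundle is stable if and only if its dual is. You instead stay with $K$ itself, which forces you to produce a left resolution of $K$; you get it from exactness of the Koszul complex of the regular sequence $(x^s,y^s,z^s)$, namely $0 \to \cO(-2s) \to \cO(-s)^{\oplus 3} \to K \to 0$, after which the same line-bundle cohomology on $\P^2$ finishes the argument. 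The trade-off is clear: the paper's version needs the (standard but external) fact about stability under dualisation, while yours needs the (standard but external) Koszul exactness; yours has the small advantage of proving the vanishing for all $k \le s/2$ at once rather than reducing to the single twist $k_0=\lfloor -s/2\rfloor$ and monotonicity. Both arguments also agree on the reality statement via \cref{proposition:real-structure}.
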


\begin{proof}
    The bundle $K$ has a real structure by \cref{proposition:real-structure}.
    We will prove that $K^*$ is stable.
    From \cref{proposition:chern-classes-of-monads} we have that $c_1(K^*)=s$ and $c_2(K^*)=s^2$.
    By \cref{theorem:hoppe-criterion}, we have that $K^*$ is stable if $H^0(K^*(k))=0$ for $k \leq -\frac{s}{2}$.
    Let $k_0=\left\lfloor -\frac{s}{2} \right\rfloor$.
    Taking the dual of \cref{equation:cotangent-bundle-generalisation-complex}, tensoring by $\mathcal{O} \left( k_0 \right)$ and passing to the long exact sequence in cohomology gives
    \[
        \dots
        \rightarrow
        \underbrace{
        H^0 \left( \mathcal{O} \left( k_0 \right)^{\oplus 3} \right)
        }_{=0}
        \rightarrow
        H^0 \left( K^* (k_0) \right)
        \rightarrow
        \underbrace{
        H^1 \left( \mathcal{O} \left(-s +k_0 \right) \right)
        }_{=0}
        \rightarrow
        \dots
    \]
    Thus, $H^0 \left( K^* (k_0) \right)=0$ which implies $H^0 \left( K^* (k) \right)=0$ for $k \leq k_0$.
    Hence, by \cref{theorem:hoppe-criterion}, we have that $K^*$ is stable.
    By \cite[Proposition 1.4 (iv)]{Takemoto1972}, we have that also $K$ is stable, which proves the claim.
\end{proof}

\subsection{Examples of bundles on the branched double cover of \texorpdfstring{$\P^1 \times \P^1$}{P1 x P1}}
\label{subsection:P1xP1-examples}

The double cover $\pi: X \rightarrow \P^1 \times \P^1$ branched over a smooth curve of bi-degree $(4,4)$ is another example of a K3 surface, see \cref{subsection:branched-P1xP1}.
As in the case of branched covers of $\mathbb{P}^2$, if one chooses a \emph{real} branching curve, then $X$ has a real structure.
Again, one can choose branching curves so that $\Pic(X) = \pi^*( \Pic (\P^1 \times \P^1))$, see \cref{example:branched-P1xP1-with-small-Picard}.
Hence, we are led to construct examples of real stable bundles on $\P^1\times \P^1$, as every such bundle corresponds to a real stable bundle on a branched double cover of $\P^1 \times \P^1$ with the property $\Pic(X) = \pi^*( \Pic (\P^1 \times \P^1))$ by \cref{proposition:stable-on-pullback}.

Stable bundles on $\P^1 \times \P^1$ have also been studied previously.
The complex surface $\P^1 \times \P^1$ is isomorphic to a smooth quadric in $\P^3$ and the special case $\mathbb{F}_0$ in the family of Hirzebruch surfaces.
Two classical references studying stable bundles are \cite{Buchdahl1987,Socorro1985}, and a recent account can be found in \cite{Huh2011}.
In particular, \cite{Huh2011} gives an explicit description of some moduli spaces of rank $2$ bundles on $\P^1 \times \P^1$.

In this subsection, we present three examples:
one rank $2$ example that is non-rigid, 
one rigid rank $3$ example, and a family of non-rigid rank $3$ examples.

\subsubsection{A rank \texorpdfstring{$2$}{2} example}

Consider the complex
\begin{align}
\label{equation:complex-for-rank2-ex-on-P1xP1}
A=\mathcal{O}(0,0)
\overset{\begin{pmatrix}
    x_1 & x_2 & y_1 & y_2
\end{pmatrix}}{\longrightarrow}
B=\mathcal{O}(1,0)^{\oplus 2} \oplus \mathcal{O}(0,1)^{\oplus 2}
\overset{\begin{pmatrix}
    y_1 \\ y_2 \\ -x_1 \\ -x_2
\end{pmatrix}}{\longrightarrow}
C = \mathcal{O}(1,1)
\end{align}
and its cohomology monad 
\begin{align}
\label{equation:cohomology-monad-on-P1xP1}
E:= \Ker
\begin{pmatrix}
    y_1\\y_2\\-x_1\\-x_2
\end{pmatrix}
/
\Im (x_1 \,\, x_2 \,\, y_1 \,\, y_2).
\end{align}

\begin{proposition}
    \label{proposition:rank-2-on-P1xP1}
    The bundle $E \rightarrow \mathbb{P}^1 \times \mathbb{P}^1$ defined in \cref{equation:cohomology-monad-on-P1xP1} is real and stable.
\end{proposition}

\begin{proof}
The bundle $E$ has a real structure by \cref{proposition:real-structure}.
Splitting the complex from \cref{equation:complex-for-rank2-ex-on-P1xP1} into two long exact sequences gives:
\begin{align*}
    0 &\rightarrow K \rightarrow B \rightarrow C \rightarrow 0,
    \\
    0 &\rightarrow A \rightarrow K \rightarrow E \rightarrow 0,
\end{align*}
where $K=\Ker \begin{pmatrix}
    y_1\\y_2\\-x_1\\-x_2
\end{pmatrix}$.
Using \cref{proposition:chern-classes-of-monads}, one checks 
\begin{align*}
    c_1(K)&=c_1(B)-c_1(C)=(1,1),\;\; \text{ and }\\ c_1(E)&=c_1(K)-c_1(A)=(1,1) \; .
\end{align*}

Hence, by the Hoppe criterion \cref{theorem:hoppe-criterion}, $E$ is stable if $H^0(E \tensor \mathcal{O}(k,l))=0$ for $k+l \leq -1$.
All pairs $(k,l)$ satisfying $k+l \leq -1$ are shown in \cref{figure:twist-triangle}.
As shown in the figure, these pairs $(k,l)$ fall into two classes that are treated separately.

\begin{figure}[htbp]
\centering
\definecolor{ffqqtt}{rgb}{1,0,0.2}
\definecolor{ududff}{rgb}{0.30196078431372547,0.30196078431372547,1}
\definecolor{xdxdff}{rgb}{0.49019607843137253,0.49019607843137253,1}
\definecolor{qqwwzz}{rgb}{0,0.4,0.6}
\definecolor{wqwqwq}{rgb}{0.3764705882352941,0.3764705882352941,0.3764705882352941}
\begin{tikzpicture}[line cap=round,line join=round,>=triangle 45,x=1cm,y=1cm]
\begin{axis}[
x=1cm,y=1cm,
axis lines=middle,
ymajorgrids=true,
xmajorgrids=true,
xmin=-4.291413200673038,
xmax=4.511485486480677,
ymin=-2.5712705244191194,
ymax=2.5688128469303795,
xtick={-4,-3,...,4},
ytick={-2,-1,...,2},
        xlabel=$k$, xlabel style={alias=aux},
        ylabel=$l$,  ylabel style={alias=auy}]
\clip(-4.291413200673038,-2.5712705244191194) rectangle (4.511485486480677,2.5688128469303795);
\fill[line width=1pt,color=qqwwzz,fill=qqwwzz,fill opacity=0.31] (-1.5,1) -- (-1.5,-1.5) -- (1,-1.5) -- cycle;
\fill[line width=1pt,color=ffqqtt,fill=ffqqtt,fill opacity=0.27] (-6.282802587468886,5.782802587468886) -- (-1.5,1) -- (-1.5,-1.5) -- (1,-1.5) -- (7.496074129017353,-7.996074129017353) -- (-6.551819384040839,-8.036963827634215) -- cycle;
\draw [line width=1pt,domain=-4.291413200673038:4.511485486480677] plot(\x,{(-0.5-1*\x)/1});
\draw [line width=1pt,color=qqwwzz] (-1.5,1)-- (-1.5,-1.5);
\draw [line width=1pt,color=qqwwzz] (-1.5,-1.5)-- (1,-1.5);
\draw [line width=1pt,color=qqwwzz] (1,-1.5)-- (-1.5,1);
\draw [line width=1pt,color=ffqqtt] (-6.282802587468886,5.782802587468886)-- (-1.5,1);
\draw [line width=1pt,color=ffqqtt] (-1.5,1)-- (-1.5,-1.5);
\draw [line width=1pt,color=ffqqtt] (-1.5,-1.5)-- (1,-1.5);
\draw [line width=1pt,color=ffqqtt] (1,-1.5)-- (7.496074129017353,-7.996074129017353);
\draw [line width=1pt,color=ffqqtt] (7.496074129017353,-7.996074129017353)-- (-6.551819384040839,-8.036963827634215);
\draw [line width=1pt,color=ffqqtt] (-6.551819384040839,-8.036963827634215)-- (-6.282802587468886,5.782802587468886);
\begin{scriptsize}
\draw [fill=wqwqwq] (-4,3) circle (2.5pt);
\draw [fill=wqwqwq] (-3,3) circle (2.5pt);
\draw [fill=wqwqwq] (-2,3) circle (2.5pt);
\draw [fill=wqwqwq] (-1,3) circle (2.5pt);
\draw [fill=wqwqwq] (0,3) circle (2.5pt);
\draw [fill=wqwqwq] (1,3) circle (2.5pt);
\draw [fill=wqwqwq] (2,3) circle (2.5pt);
\draw [fill=wqwqwq] (3,3) circle (2.5pt);
\draw [fill=wqwqwq] (4,3) circle (2.5pt);
\draw [fill=wqwqwq] (4,2) circle (2.5pt);
\draw [fill=wqwqwq] (3,2) circle (2.5pt);
\draw [fill=wqwqwq] (2,2) circle (2.5pt);
\draw [fill=wqwqwq] (1,2) circle (2.5pt);
\draw [fill=wqwqwq] (0,2) circle (2.5pt);
\draw [fill=wqwqwq] (-1,2) circle (2.5pt);
\draw [fill=wqwqwq] (-2,2) circle (2.5pt);
\draw [fill=wqwqwq] (-3,2) circle (2.5pt);
\draw [fill=wqwqwq] (-4,2) circle (2.5pt);
\draw [fill=wqwqwq] (-4,1) circle (2.5pt);
\draw [fill=wqwqwq] (-3,1) circle (2.5pt);
\draw [fill=wqwqwq] (-2,1) circle (2.5pt);
\draw [fill=wqwqwq] (-1,1) circle (2.5pt);
\draw [fill=wqwqwq] (0,1) circle (2.5pt);
\draw [fill=wqwqwq] (1,1) circle (2.5pt);
\draw [fill=wqwqwq] (2,1) circle (2.5pt);
\draw [fill=wqwqwq] (3,1) circle (2.5pt);
\draw [fill=wqwqwq] (4,1) circle (2.5pt);
\draw [fill=wqwqwq] (4,0) circle (2.5pt);
\draw [fill=wqwqwq] (3,0) circle (2.5pt);
\draw [fill=wqwqwq] (2,0) circle (2.5pt);
\draw [fill=wqwqwq] (1,0) circle (2.5pt);
\draw [fill=wqwqwq] (0,0) circle (2.5pt);
\draw [fill=wqwqwq] (-1,0) circle (2.5pt);
\draw [fill=wqwqwq] (-2,0) circle (2.5pt);
\draw [fill=wqwqwq] (-3,0) circle (2.5pt);
\draw [fill=wqwqwq] (-4,0) circle (2.5pt);
\draw [fill=wqwqwq] (-4,-1) circle (2.5pt);
\draw [fill=wqwqwq] (-3,-1) circle (2.5pt);
\draw [fill=wqwqwq] (-2,-1) circle (2.5pt);
\draw [fill=wqwqwq] (-1,-1) circle (2.5pt);
\draw [fill=wqwqwq] (0,-1) circle (2.5pt);
\draw [fill=wqwqwq] (1,-1) circle (2.5pt);
\draw [fill=wqwqwq] (2,-1) circle (2.5pt);
\draw [fill=wqwqwq] (3,-1) circle (2.5pt);
\draw [fill=wqwqwq] (4,-1) circle (2.5pt);
\draw [fill=wqwqwq] (-4,-2) circle (2.5pt);
\draw [fill=wqwqwq] (-3,-2) circle (2.5pt);
\draw [fill=wqwqwq] (-2,-2) circle (2.5pt);
\draw [fill=wqwqwq] (-1,-2) circle (2.5pt);
\draw [fill=wqwqwq] (0,-2) circle (2.5pt);
\draw [fill=wqwqwq] (1,-2) circle (2.5pt);
\draw [fill=wqwqwq] (2,-2) circle (2.5pt);
\draw [fill=wqwqwq] (3,-2) circle (2.5pt);
\draw [fill=wqwqwq] (4,-2) circle (2.5pt);
\draw [fill=wqwqwq] (-4,-3) circle (2.5pt);
\draw [fill=wqwqwq] (-3,-3) circle (2.5pt);
\draw [fill=wqwqwq] (-2,-3) circle (2.5pt);
\draw [fill=wqwqwq] (-1,-3) circle (2.5pt);
\draw [fill=wqwqwq] (0,-3) circle (2.5pt);
\draw [fill=wqwqwq] (1,-3) circle (2.5pt);
\draw [fill=wqwqwq] (2,-3) circle (2.5pt);
\draw [fill=wqwqwq] (3,-3) circle (2.5pt);
\draw [fill=wqwqwq] (4,-3) circle (2.5pt);
\draw [fill=xdxdff] (-6.282802587468886,5.782802587468886) circle (2.5pt);
\draw [fill=xdxdff] (7.496074129017353,-7.996074129017353) circle (2.5pt);
\draw [fill=ududff] (-6.551819384040839,-8.036963827634215) circle (2.5pt);
\end{scriptsize}
\end{axis}
\end{tikzpicture}
\caption{Pictorial representation of the proof of \cref{proposition:rank-2-on-P1xP1}: the Hoppe criterion requires verifying that $H^0(E \otimes \mathcal{O}(k,l))=0$ for $k+l \leq -1$. All pairs $(k,l)$ satisfying $k+l \leq -1$ lie in the coloured region. In the blue-shaded region, we check the condition using an explicit calculation. In the red-shaded region,  we have $k \leq -2$ or $l \leq -2$, and in this case there is an alternative argument to verify the condition. However, this alternative argument does not work within the blue region.}
\label{figure:twist-triangle}

\end{figure}

\begin{itemize}
\item
\textbf{Computation of finitely many cohomology groups.}
A \texttt{Macaulay2} calculation shows:
\[
h^0(E \otimes \mathcal{O}(-1,0))=h^0(E \otimes \mathcal{O}(0,-1))=h^0(E \otimes \mathcal{O}(-1,-1))=0.
\]

\item
\textbf{Computation of the remaining cohomology groups.}
Let $k,l \in \Z$ such that $k+l \leq -1$ and $(k,l) \notin \{ (-1,0),(0,-1),(-1,-1)\}$.
Then $k \leq -2$ or $l \leq -2$.
We can assume without loss of generality that $l \leq -2$.
For $x=[0:1] \in \mathbb{P}^1$ we have:
\begin{align*}
    h^0(E \otimes \mathcal{O}(k,l))
    \leq
    h^0(E \otimes \mathcal{O}(k,l) \mid_{\{x\} \times \mathbb{P}^1})
    =
    h^0(E\mid_{\{x\} \times \mathbb{P}^1} \otimes \mathcal{O}(l))
    \leq
    h^0(E\mid_{\{x\} \times \mathbb{P}^1} \otimes \mathcal{O}(-2))
    =0,
\end{align*}
where the last equality is again obtained by a \texttt{Macaulay2} calculation.
\qedhere
\end{itemize}
\end{proof}

Using \cref{proposition:chern-classes-of-monads}, one checks that $c_2(K)=2$ and $c_2(E)=2$.
Thus, the bundle constructed from \cref{equation:cohomology-monad-on-P1xP1} is dual to the bundle with $c_1(E)=(-1,-1)$, $c_2(E)=k$ studied in \cite{Huh2011} for the choice of $k=2$.
The dimension of its moduli space is $4\cdot c_2(E)-5=3$.

\subsubsection{A family of rank \texorpdfstring{$3$}{3} examples}

For $n \geq 1$, define the rank $3$ vector bundle $K_n$ over $\mathbb{P}^1 \times \mathbb{P}^1$ as follows:
\begin{align}
\label{equation:K-P1xP1-def}
K_n:= \Ker \begin{pmatrix}
    x_1^n y_1^n \\
    x_1^n y_2^n \\
    x_2^n y_1^n \\
    x_2^n y_2^n
\end{pmatrix},
\quad
0 \rightarrow 
K_n \rightarrow 
\mathcal{O}(-n,-n)^{\oplus 4}
\rightarrow \mathcal{O}
\rightarrow
0.
\end{align}

\begin{proposition}
    \label{proposition:K-on-P1xP1-stable}
    For $n \geq 1$, the bundle $K_n \rightarrow \mathbb{P}^1 \times \mathbb{P}^1$ defined in \cref{equation:K-P1xP1-def} is real and stable.
    For $n=1$ its pullback under the projection $\pi: X \rightarrow \mathbb{P}^1 \times \mathbb{P}^1$ is infinitesimally rigid.
\end{proposition}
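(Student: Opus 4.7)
The plan is to verify four things in sequence: the real structure, stability of $K$ on $\mathbb{P}^1 \times \mathbb{P}^1$, stability of $\pi^* K$ on the K3 double cover $X$, and finally infinitesimal rigidity via an expected-dimension count. The real structure is immediate from \cref{proposition:real-structure}(1), since the defining map has integer (hence real) coefficients. Applying \cref{proposition:chern-classes-of-monads} to the defining short exact sequence yields $c_1(K) = (-4, -4)$ and $c_2(K) = 12$, so $\mu_H(K) = -8/3$ for $H = \mathcal{O}(1, 1)$.

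Stability via \cref{theorem:hoppe-criterion} requires two vanishings. For $s = 1$, the condition $H^0(K \otimes \mathcal{O}(a, b)) = 0$ for $a + b \leq 2$ follows from the defining sequence: the target $H^0(\mathcal{O}(a-1, b-1))^{\oplus 4}$ vanishes unless $a, b \geq 1$, and the sole surviving case $(a, b) = (1, 1)$ is handled because the induced map $H^0(\mathcal{O})^{\oplus 4} \to H^0(\mathcal{O}(1, 1))$, $(c_1, c_2, c_3, c_4) \mapsto c_1 x_1 y_1 + c_2 x_1 y_2 + c_3 x_2 y_1 + c_4 x_2 y_2$, is the canonical basis identification. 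For $s = 2$, using $\Lambda^2 K \cong K^* \otimes \det K = K^* \otimes \mathcal{O}(-4, -4)$, the condition becomes $H^0(K^* \otimes \mathcal{O}(c, d)) = 0$ for $c + d \leq -3$. The approach here is to use the dual sequence $0 \to \mathcal{O} \to \mathcal{O}(1, 1)^{\oplus 4} \to K^* \to 0$ (valid since $K$ is locally free) twisted by $\mathcal{O}(c, d)$, which yields
\[
H^0(\mathcal{O}(c+1, d+1))^{\oplus 4} \to H^0(K^* \otimes \mathcal{O}(c, d)) \to H^1(\mathcal{O}(c, d)) \xrightarrow{\mu} H^1(\mathcal{O}(c+1, d+1))^{\oplus 4}.
\]

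The analysis of $c + d \leq -3$ splits into two regimes. When $c, d \leq -1$, K\"unneth annihilates both flanking terms and the vanishing is immediate. Otherwise, one may assume $c \geq 0$ and $d \leq -3$; then $H^0(\mathcal{O}(c+1, d+1)) = 0$ since $d + 1 < 0$, and under K\"unneth the connecting map factors as $\mu = \mu_x \otimes \mu_y$, where $\mu_x \colon H^0(\mathcal{O}_{\mathbb{P}^1}(c)) \to H^0(\mathcal{O}_{\mathbb{P}^1}(c+1))^{\oplus 2}$ sends $s \mapsto (x_1 s, x_2 s)$ and $\mu_y \colon H^1(\mathcal{O}_{\mathbb{P}^1}(d)) \to H^1(\mathcal{O}_{\mathbb{P}^1}(d+1))^{\oplus 2}$ sends $t \mapsto (y_1 t, y_2 t)$. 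The first is trivially injective; the second is injective by Serre duality, since its dual $(s_1, s_2) \mapsto y_1 s_1 + y_2 s_2 \colon H^0(\mathcal{O}_{\mathbb{P}^1}(-d-3))^{\oplus 2} \to H^0(\mathcal{O}_{\mathbb{P}^1}(-d-2))$ is surjective precisely when $-d - 2 \geq 1$, i.e., $d \leq -3$. A tensor product of injections remains injective, so $\mu$ is injective and $H^0(K^* \otimes \mathcal{O}(c, d)) = 0$.

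With stability of $K$ established, the double cover $\pi \colon X \to \mathbb{P}^1 \times \mathbb{P}^1$ of \cref{example:branched-P1xP1-with-small-Picard} induces an isomorphism on Picard groups, so \cref{proposition:stable-on-pullback} yields stability of $\pi^* K$ on $X$ with respect to $\pi^* H$. For infinitesimal rigidity, $\pi$ being of degree $2$ doubles intersection pairings, giving $c_1(\pi^* K)^2 = 64$ and $c_2(\pi^* K) = 24$; plugging $r = 3$ and $\chi(\mathcal{O}_X) = 2$ into $\text{exp.dim} = 2 r c_2 - (r-1) c_1^2 - (r^2 - 1)\chi(\mathcal{O}_X)$ returns $0$, and smoothness of the moduli space of stable sheaves on a K3 surface forces $\operatorname{Ext}^1(\pi^* K, \pi^* K) = 0$. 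The main obstacle is the $s = 2$ Hoppe check in the regime $c \geq 0$, $d \leq -3$, where neither flanking cohomology term is automatically zero and one must exploit the product structure of the base to establish injectivity of $\mu$ via its K\"unneth factorization.
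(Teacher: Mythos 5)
Your proof is correct, and for the stability check it takes a genuinely different route from the paper. The overall skeleton is the same --- real structure via \cref{proposition:real-structure}, stability via the Generalised Hoppe Criterion \cref{theorem:hoppe-criterion}, transfer to $X$ via \cref{proposition:stable-on-pullback}, and rigidity from $\text{exp.dim}=0$ together with \cref{theorem:single-point-moduli} and smoothness of the moduli space --- but the paper verifies the required vanishings $H^0(K(k,l))=0$ and $H^0(\Lambda^2K(k,l))=0$ by running \texttt{Macaulay2} on finitely many twists (using the presentation of $K$ as $\Im(a)$ in \cref{equation:K-as-image}) and then restricting to a ruling $\mathbb{P}^1\times\{z\}$ to dispose of the infinitely many remaining $(k,l)$. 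You instead give a complete pen-and-paper argument: for $s=1$ the only nontrivial twist is $(1,1)$, where the four monomials $x_iy_j$ form a basis of $H^0(\mathcal{O}(1,1))$ so the relevant map is injective; for $s=2$ you rewrite $\Lambda^2K\cong K^*\otimes\mathcal{O}(-4,-4)$, dualise the defining sequence, and prove injectivity of the connecting-level map $H^1(\mathcal{O}(c,d))\to H^1(\mathcal{O}(c+1,d+1))^{\oplus 4}$ via its K\"unneth factorisation $\mu_x\otimes\mu_y$ and Serre duality on the $\mathbb{P}^1$ factors. All the individual steps check out (including the slope bounds $k+l\le 8/3$ and $16/3$, the translation to $c+d\le-3$, the case split, and the numerics $c_1(\pi^*K)^2=64$, $c_2(\pi^*K)=24$ giving expected dimension $0$). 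What the paper's approach buys is mechanical reusability --- the same script handles \cref{proposition:another-rank-3-example} and other monads with no new ideas; what yours buys is a self-contained, computer-free and more conceptual proof, at the cost of being tailored to the special shape of this particular monad.
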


\begin{proof}
    The bundle $K_n$ has a real structure by \cref{proposition:real-structure}.
    
    We next prove the stability of the bundle. First,  assume that $n=1$. 
    Let $s:S=\P^1 \times \P^1 \rightarrow Q \subset \mathbb{P}^3$ be the Segre embedding.
    Pulling back the Euler sequence twisted by $\mathcal{O}(-1)$ gives $K_1 \simeq s^*(\Omega_{\mathbb{P}^3}|_Q)$.
    \cite[Theorem B]{Biswas2019} gives that $\Omega_{\mathbb{P}^3}|_Q$ is $\mathcal{O}(1,1)$-stable, hence $K_1$ is stable.

    Now, let $n \geq 1$. Define $\nu_n([u:v]) :=[u^n:v^n]$ and $\pi_n :=(\nu_n, \nu_n): \P^1 \times \P^1 \rightarrow \P^1 \times \P^1$.
    Pulling back \cref{equation:K-P1xP1-def} shows $K_n \simeq \pi_n^* K_1$.
    By \cite[Corollaire 5.2]{Grothendieck1971}, the étale fundamental group satisfies $\pi^{\text{ét}}_1(\P^1 \times \P^1)=1$.
    By \cite[Theorem 1.2]{Biswas2022}, $\pi_n^* K_1$ is $\mathcal{O}(n,n)$-stable.
    Since multiplying the polarisation by a positive integer does not change slope stability, it follows that $\pi_n^* K_1$ is also $\mathcal{O}(1,1)$-stable.

    We then prove infinitesimal rigidity in the case $n=1$. 
    The bundle $K_1$ has $c_1(K)=(-4,-4)$ and $c_2(K)=12$.
    The expected dimension of the moduli space of bundles with the same rank and Chern classes as $\pi^*(K_1)$ is $\text{expdim}=2 \cdot 6 \cdot 12-2 \cdot 2 (16+16)-8 \cdot 2=0$.
    By \cref{theorem:single-point-moduli} we have that the moduli space consists of one point only.
    Obstruction spaces on moduli spaces of semi-stable sheaves on K3 surfaces vanish (see \cite[p.168]{Huybrechts2016}), therefore, $\pi^*K_1$ is also infinitesimally rigid.
\end{proof}

The same computer-assisted proof method as in the other propositions also works in this case.
This proof is given in \cref{section:appendix-alternative-proofs}, alongside another pen-and-paper proof that stands alone and does not use the results from \cite{Biswas2019,Biswas2022}.

\subsubsection{A second family of rank \texorpdfstring{$3$}{3} examples}

Another rank 3 example is given by the following construction.
Define the rank $3$ vector bundle $K$ over $\mathbb{P}^1 \times \mathbb{P}^1$ as follows:
\begin{align}
\label{equation:K-P1xP1-def2}
K:= \Ker \begin{pmatrix}
    x_1^n \\
    x_2^n \\
    y_1^n \\
    y_2^n
\end{pmatrix},
\quad
0 \rightarrow 
K \rightarrow 
\mathcal{O}(-n,0)^{\oplus 2}\oplus \O(0,-n)^{\oplus 2}
\rightarrow \O(0,0)
\rightarrow
0.
\end{align}

\begin{proposition}
    \label{proposition:another-rank-3-example}
    The bundle $K \rightarrow \mathbb{P}^1 \times \mathbb{P}^1$ defined in \cref{equation:K-P1xP1-def2} is real and stable for $n=2$.
\end{proposition}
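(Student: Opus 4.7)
The existence of a real structure on $K$ follows immediately from part 1 of \cref{proposition:real-structure}, since the defining map $(x_1^n, x_2^n, y_1^n, y_2^n)$ has entries with real (indeed integer) coefficients. For stability, my plan is to invoke the Generalised Hoppe Criterion (\cref{theorem:hoppe-criterion}) with polarisation $H = \mathcal{O}(1,1)$. Additivity of Chern classes in the defining short exact sequence yields $c_1(K) = (-2n,-2n)$, which for $n=2$ gives slope $\mu_H(K) = -8/3$; the criterion therefore reduces stability to proving $H^0(K(k,l)) = 0$ for $k + l \leq 2$ and $H^0((\wedge^2 K)(k,l)) = 0$ for $k + l \leq 5$.

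For the first vanishing I would twist the defining sequence by $\mathcal{O}(k,l)$ and pass to the long exact cohomology sequence. Writing $F := \mathcal{O}(-2,0)^{\oplus 2} \oplus \mathcal{O}(0,-2)^{\oplus 2}$, K\"{u}nneth yields $H^0(F(k,l)) = 0$ for every pair with $k + l \leq 2$ except $(k,l) \in \{(2,0),(0,2)\}$; at those two exceptional pairs the induced map $H^0(F(k,l)) \to H^0(\mathcal{O}(k,l))$ is manifestly injective (at $(2,0)$ it is $(a,b)\mapsto a x_1^2 + b x_2^2$), so $H^0(K(k,l)) = 0$ in every case.

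For the second vanishing I would exploit the Koszul-filtration short exact sequence
\[
    0 \to \wedge^2 K \to \wedge^2 F \to K \to 0
\]
coming from the surjection $F \to \mathcal{O}$, together with the explicit decomposition $\wedge^2 F = \mathcal{O}(-4,0) \oplus \mathcal{O}(-2,-2)^{\oplus 4} \oplus \mathcal{O}(0,-4)$. By K\"{u}nneth, $H^0(\wedge^2 F(k,l)) = 0$ outside the finite list of pairs with $k,l \geq 0$ and one of $k \geq 4$, $l \geq 4$, or $\min(k,l) \geq 2$; for every other pair with $k + l \leq 5$ the injection $H^0((\wedge^2 K)(k,l)) \hookrightarrow H^0(\wedge^2 F(k,l))$ immediately delivers vanishing. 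The remaining nine pairs I would handle exactly as in the proof of \cref{proposition:K-on-P1xP1-stable}: describe $\wedge^2 K$ via an explicit map of graded $\mathbb{C}[x_1,x_2,y_1,y_2]$-modules (the connecting map $\wedge^2 F \to K$ being Koszul contraction against $p = (x_1^2, x_2^2, y_1^2, y_2^2)$), and compute each residual $h^0$ in \texttt{Macaulay2}.

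The hard part will be precisely this residual computation for the second vanishing: at pairs such as $(k,l) = (2,2)$ or $(5,0)$, both $H^0(\wedge^2 F(k,l))$ and $H^0(K(k,l))$ are nonzero, so the vanishing of $H^0((\wedge^2 K)(k,l))$ hinges on injectivity of the connecting map into $H^0(K(k,l))$ and is most cleanly verified by computer algebra rather than by hand. The restriction of the statement to $n = 2$ signals that this finite residual list grows rapidly with $n$ and that some of the required injectivities presumably fail for larger $n$.
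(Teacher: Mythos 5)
Your proof is correct and reaches the paper's conclusion by the same broad strategy (real structure via \cref{proposition:real-structure}; stability via \cref{theorem:hoppe-criterion} with $H=\mathcal{O}(1,1)$, giving the thresholds $k+l\le 2$ for $s=1$ and $k+l\le 5$ for $s=2$), but you organise the reduction to a finite computation differently, and in one place more efficiently. For $s=1$ the paper computes $h^0(K(k,l))$ in \texttt{Macaulay2} at the finitely many non-negative twists and disposes of the twists with $k\le-1$ or $l\le-1$ by restricting to a ruling $\mathbb{P}^1\times\{z\}$; your observation that $H^0(K(k,l))\subseteq H^0(F(k,l))$ vanishes by K\"unneth except at $(2,0)$ and $(0,2)$, where the kernel of $(a,b)\mapsto ax_1^2+bx_2^2$ is visibly zero, removes the computer from this half entirely. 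For $s=2$ the paper tabulates $h^0((\wedge^2K)(k,l))$ for all $0\le k,l\le5$ and again restricts to a ruling for negative twists, whereas your filtration sequence $0\to\wedge^2K\to\wedge^2F\to K\to0$ (valid because the quotient of $F\to\mathcal{O}$ is a line bundle) disposes of all but nine non-negative twists at once and simultaneously handles every twist with $k\le-1$ or $l\le-1$; the residual nine still require \texttt{Macaulay2} or a hand verification of injectivity of the Koszul map on sections (feasible: at $(5,0)$ it is $f\mapsto(-fx_2^2,fx_1^2,0,0)$), so both proofs lean on the computer to a comparable, small extent. Two minor points: the displayed sequence \cref{equation:K-P1xP1-def2} contains a typo ($\mathcal{O}(0,0)^{\oplus4}$ should be $\mathcal{O}(0,0)$), which you silently and correctly fixed by writing the surjection $F\to\mathcal{O}$; and your closing guess that the residual injectivities fail for larger $n$ is not borne out --- the paper's remark reports that the same computation succeeds at least for $n=1,2,3,4$, the restriction to a fixed $n$ reflecting only that the finite check must be rerun for each value rather than a known failure.
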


\begin{proof}
The bundle $K$ has a real structure by \cref{proposition:real-structure}.
We can write $K=\Im (a)$ for
\begin{align}
\begin{split}
a&:
\mathcal{O}(-2n,0) 
\oplus 
\mathcal{O}(-n,-n) 
\oplus 
\mathcal{O}(-n,-n) 
\oplus 
\mathcal{O}(0,-2n) 
\oplus 
\O(-n,-n)
\oplus 
\O(-n,-n)
\rightarrow
\mathcal{O}(-n,0)^{\oplus 2}\oplus \O(0,-n)^{\oplus 2}
\\
a&=
\begin{pmatrix}
    -x_2^n &0& -y_1^n & 0 & 0&-y_2^n \\
    x_1^n &-y_1^n& 0 & 0 & -y_2^n&0 \\
    0 & x_2^n & x_1^n&-y_2^n &0& 0 \\
    0 & 0 & 0&y_1^n &x_2^n& x_1^n
\end{pmatrix}.
\end{split}
\end{align}

Under the correspondence of modules and sheaves, this corresponds to the module $\Im (a)$ for 
\begin{align*}
a:
R_{\bullet+(2n,0)}
\oplus
R_{\bullet+(n,n)}
\oplus
R_{\bullet+(n,n)}
\oplus
R_{\bullet+(0,2n)} 
\oplus 
R_{\bullet+(n,n)}
\oplus
R_{\bullet+(n,n)}
\rightarrow
R_{\bullet+(n,0)}^2\oplus R_{\bullet+(0,n)}^2
.
\end{align*}

We check stability of $K$ using the Hoppe criterion (\cref{theorem:hoppe-criterion}).
To this end, let first $s=1$.
We have $c_1(K)=(-2n,-2n)$.
Thus we must check $H^0(K(k,l))=0$ for all $k,l \in \mathbb{Z}$ with $k+l \leq 4n/3$, i.e. $k+l \leq \frac{8}{3}$ in the $n=2$ case.

\begin{itemize}
    \item 
    \textbf{Computation of finitely many cohomology groups.}
    We compute using \texttt{Macaulay2} that $H^0(K(k,l))=0$ for $(k,l) \in \{(0,2),(2,0),(1,1),(1,0), (0,1)\}$.

    \item 
    \textbf{Computation of the remaining cohomology groups.}
    It remains to check the case where $l\leq -1$ or $k\leq -1$. 
    Without loss of generality we assume $k\leq -1$ and as before fix $z=[0:1]\in\mathbb{P}^1$ and compute 
    \begin{align*}
        H^0(K|_{\mathbb{P}^1\times \{z\}}\otimes \O(-1))=0.
    \end{align*}
    which as in \cref{proposition:K-on-P1xP1-stable} implies that $H^0(K(k,l))=0$ for $k+l\leq 8/3$.
\end{itemize}

Now for $s=2$ we need that $H^0(\Lambda^2K(k,l))=0$ for $k+l\leq 16/3$.

\begin{itemize}
    \item 
    \textbf{Computation of finitely many cohomology groups.}
    Using $\texttt{Macaulay2}$ we compute
    \begin{align*}
    H^0((\wedge^2 K)(k,l))_{0 \leq k,l \leq 5}
    &=
    \begin{pmatrix}
        0&1&2&13&24&35\\
        0&0&0&8&16&24\\
       0&0& 0&4&8&13\\
        0&0&0&0&0&2\\
        0&0&0&0&0&1\\
        0&0&0&0&0&0
    \end{pmatrix}.
    \end{align*}

    \item 
    \textbf{Computation of the remaining cohomology groups.}
    It only remains to check $H^0(\Lambda^2K(k,l)) = 0$ when $k\leq -1$ or $l\leq -1$. 
    As before we check that for $z=[0:1]$
    \begin{align*}
     H^0(\Lambda^2K|_{\mathbb{P}^1\times \{z\}}\otimes \O(-1))=0.
    \end{align*}
    Thus we have that $H^0(\Lambda^2K(k,l))=0$ for $k+l\leq 16/3$.
    \qedhere
\end{itemize}
\end{proof}

\begin{remark}
    We found the above proof to work for every value of $n$ we tried, for example for $n=1,2,3,4$, so \cref{proposition:another-rank-3-example} also holds at least for these cases.
    However, in the step \emph{Computation of finitely many cohomology groups} we require that $n$ be some concrete value and we have no proof that the proposition holds for \emph{every} $n \geq 1$.
\end{remark}

\Cref{proposition:K-on-P1xP1-stable,proposition:another-rank-3-example} prove the second point of \cref{theorem:main-theorem}.

\subsection{An example on a quartic}
\label{subsection:example-on-quartic}

Consider a K3 surface $X$ with holomorphic involution $\iota: X \rightarrow X$ and anti-holomorphic involution $\sigma: X \rightarrow X$.
In the two subsections preceding this subsection, we constructed $\iota$-invariant bundles on $X$ by pulling back bundles from $X/\< \iota \>$.
Alternatively, one may do the following:
let $E \rightarrow X$ be any bundle on $X$,
then the bundle $\iota^* E \oplus E$ is $\iota$-invariant, because a canonical lift of $\iota$ to $\iota^* E \oplus E$ is given by the following map:
\begin{align*}
    (\iota^* E \oplus E)_x
    =
    E_{\iota(x)} \oplus E_x
    & \rightarrow
    (\iota^* E \oplus E)_{\iota(x)}
    =
    E_{x} \oplus E_{\iota(x)}
    \\
    (v,w) & \mapsto (w,v).
\end{align*}
The bundle $\iota^* E \oplus E$ is never stable, but it is polystable if $E$ is stable.
Furthermore, if $E$ has a real structure, then so has $\iota^* E \oplus E$.
We are thus motivated to construct a real stable bundle on $X$ that is not $\iota$-invariant.

To this end, let $X \subset \mathbb{P}^3(x,y,z,w)$ be the K3 surface from \cite[Section 6]{FNP24}, defined as:
\[
X=Z(-x(x+z-w)(xw - yz)+z(x+z)(xy - z^2)+(xy + w^2)(y^2 - zw)).
\]
It has Picard rank $2$, which is the smallest rank possible for a quartic surface in $\mathbb{P}^3$ admitting an involution, see \cite[Corollary 15.2.12]{Huybrechts2016}.
This and other properties of $X$ are collected in the following lemma:

\begin{lemma}\cite[Lemma 6.2, Proposition 6.3]{FNP24}
    \label{lemma:small-pic-k3-explicit-lemma}
    The surface $X_4$ contains the following curves:
    $$
    \begin{array}{cc}
       C\colon
    \begin{cases}
        xw - yz =0\\
        xy^2 + x^2z - z^3 - xyw + yw^2 - w^3  = 0\\
        xz^2 + z^3 + xyw + w^3 = 0
    \end{cases}
    \;\;\textrm{ and }\;\;  &  D\colon
    \begin{cases}
        xw - yz&=0\\
        xy - z^2&=0\\
        y^2 - zw&=0
    \end{cases}
    \;\;.
    \end{array}
    $$
    The curve $C$ has degree $5$ and genus $2$;
    the curve $D$ is a twisted cubic, hence it has degree $3$ and genus $0$.

    The divisor class group~$\Cl (X)$, is generated by the class of~$C$ and the hyperplane section and is isometric to the lattice $[4\; 5\; 2]$.
\end{lemma}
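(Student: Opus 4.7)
I would dispatch the four assertions of the lemma in turn. First, that $C, D\subset X$ is a direct Gr\"obner-basis check: it suffices to verify that the quartic polynomial defining $X$ lies in each of the homogeneous ideals $I(C)$ and $I(D)$, which is immediate in \texttt{Magma} or \texttt{Macaulay2}.

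For the numerical invariants, the parametrisation $[s:t]\mapsto [s^3:st^2:s^2t:t^3]$ visibly satisfies all three equations defining $D$ and embeds $\P^1$ as a rational normal cubic in $\P^3$, so $D$ has degree $3$ and genus $0$. For $C$, one computes the Hilbert polynomial in \texttt{Macaulay2}; the expected answer $5n-1$ yields $\deg C=5$ and $g(C)=2$. As a conceptual sanity check, $C$ lies on the smooth quadric $Q\colon xw-yz=0\cong \P^1\times\P^1$ as a divisor of bidegree $(2,3)$; this class has degree $2+3=5$ under the Segre embedding into $\P^3$ and arithmetic genus $(2-1)(3-1)=2$ by the adjunction formula on $Q$.

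For the Picard lattice, the adjunction formula on $X$ combined with $g(C)=2$ gives $C^2=2g(C)-2=2$, while $H^2=\deg X=4$ and $H\cdot C=\deg C=5$. Hence the sublattice $\langle H, C\rangle\subset \Cl(X)$ has Gram matrix $[4\;5\;2]$, of determinant $8-25=-17$. Because $-17$ is squarefree, this sublattice is automatically primitive in $\Cl(X)$, and no saturation is required, so it suffices to establish the upper bound $\rho(X)\leq 2$.

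The main obstacle is precisely this upper bound. I would use the standard van Luijk method: pick a prime $p$ of good reduction for $X$, count $\#X(\F_{p^n})$ for a range of $n$, reconstruct the characteristic polynomial of Frobenius on the second $\ell$-adic \'etale cohomology of $X_{\overline{\F_p}}$, and extract an upper bound on the geometric Picard number from the number of eigenvalues of the form $p$ times a root of unity. In favourable cases a single prime suffices, and the explicit computation is already carried out in the ancillary \texttt{Magma} files accompanying \cite{FNP24}. Combined with the lower bound $\rho(X)\geq 2$ supplied by the independent classes $H$ and $C$, this yields $\rho(X)=2$ and hence $\Cl(X)=\langle H, C\rangle\cong [4\;5\;2]$.
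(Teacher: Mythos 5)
The paper does not actually prove this lemma: it is imported verbatim from \cite[Lemma 6.2, Proposition 6.3]{FNP24}, so there is no in-paper argument to compare against. Your reconstruction is nevertheless the standard (and, as far as the cited source goes, the actual) route, and it is consistent with how the same issue is handled elsewhere in the paper, e.g.\ in the example of the $(4,4)$-branched double cover of $\P^1\times\P^1$, where the bound $\rho\le 2$ is likewise obtained by counting points over $\F_{3^n}$ and reading off the Frobenius eigenvalues. Your individual steps check out: the parametrisation $[s:t]\mapsto[s^3:st^2:s^2t:t^3]$ does satisfy the three equations of $D$; the Hilbert polynomial $5n-1$ gives $\deg C=5$ and $p_a(C)=2$; adjunction on the K3 gives $C^2=2$, and together with $H^2=4$, $H\cdot C=5$ this yields the Gram matrix $[4\;5\;2]$ of determinant $-17$, whose squarefreeness forces $[\Cl(X):\langle H,C\rangle]=1$ once $\rho(X)=2$ is known (your word ``primitive'' is slightly off --- the relevant fact is that the index squared divides the determinant --- but the conclusion is right). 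Two small points you should tighten: to conclude that the \emph{geometric} genus of $C$ is $2$, rather than just the arithmetic genus read off from the Hilbert polynomial, you need $C$ irreducible and smooth (another routine Jacobian/Gr\"obner check, and in any case only $p_a$ enters the adjunction computation); and for the van Luijk step, note that a single prime suffices here precisely because the target bound $2$ is even --- had the answer been odd you would need two primes and a discriminant comparison.
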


Let $K$ be the rank $2$ vector bundle over $X$ defined via
\begin{align}
\label{equation:def-bundle-on-K3}
0
\rightarrow
K=\Ker \begin{pmatrix} x\\y\\w \end{pmatrix}
\rightarrow
\mathcal{O}(-1)^{\oplus 3}
\rightarrow
\mathcal{O}
\rightarrow
0.
\end{align}

\begin{proposition}
    The sheaf $K \rightarrow X$ defined in \cref{equation:def-bundle-on-K3} is a stable rank two vector bundle and has a real structure.
\end{proposition}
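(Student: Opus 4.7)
The plan is to apply the Generalised Hoppe Criterion (\cref{theorem:hoppe-criterion}) to $K$ with the polarisation $H = \mathcal{O}_X(1)$. The real structure on $K$ is immediate from \cref{proposition:real-structure}, since the morphism $(x,y,w)$ has real coefficients. To see $K$ is locally free of rank $2$, one needs the map $(x,y,w)\colon \mathcal{O}(-1)^{\oplus 3} \to \mathcal{O}$ to be surjective on $X$, i.e., the base point $[0{:}0{:}1{:}0]$ should not lie on $X$; substituting $x = y = w = 0$ into the defining quartic gives $-z^4$, which is nonzero for $z \ne 0$, so indeed $[0{:}0{:}1{:}0] \notin X$. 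From the defining short exact sequence, \cref{proposition:chern-classes-of-monads} yields $c_1(K) = -3H$, hence $\mu_H(K) = -6$.

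Since $\rank K = 2$, \cref{theorem:hoppe-criterion} reduces stability to showing $H^0(X, K \otimes L) = 0$ for every $L \in \Pic(X)$ with $\deg_H L \le 6$. Tensoring the defining sequence by $L$ and taking cohomology gives
\begin{align*}
0 \to H^0(K \otimes L) \to H^0(L - H)^{\oplus 3} \xrightarrow{(\cdot x,\cdot y,\cdot w)} H^0(L),
\end{align*}
so it suffices to show, for each such $L$, that either $L - H$ is not effective or this multiplication map is injective.

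By \cref{lemma:small-pic-k3-explicit-lemma}, $\Pic(X) = \mathbb{Z} C \oplus \mathbb{Z} H$ with $C^2 = 2$, $C \cdot H = 5$, $H^2 = 4$. Write $L - H = aC + b'H$ and set $c := \deg_H(L - H) = 5a + 4b' \le 2$; eliminating $b'$ yields the identity $(L - H)^2 = (c^2 - 17 a^2)/4$. For $c < 0$ the divisor is not effective, and for $c = 0$ the only effective representative is $L - H = 0$, i.e., $L = H$. For $c \in \{1, 2\}$ the congruence $a \equiv c \pmod{4}$ excludes $a = 0$, so the formula forces $(L - H)^2 \le -4$ (attained at $a = \pm 1$) when $c = 1$, and $(L - H)^2 \le -16$ (attained at $a = \pm 2$) when $c = 2$. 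On the other hand, any effective decomposition $L - H = \sum n_i D_i$ into integral curves (each with $D_i^2 \ge -2$ and $D_i \cdot H \ge 1$) gives $(L - H)^2 \ge -2$ for $c = 1$ (a single integral curve of degree $1$) and $(L - H)^2 \ge -8$ for $c = 2$ (worst case a doubled $(-2)$-curve of degree $1$). These bounds are incompatible, so $L - H$ is never effective for $c \in \{1, 2\}$. Hence only $L = H$ survives; for this $L$ the multiplication map becomes $\mathbb{C}^3 \to H^0(\mathcal{O}_X(1))$, $(c_1, c_2, c_3) \mapsto c_1 x + c_2 y + c_3 w$, which is injective because $x, y, w$ are linearly independent sections (as $X$ is not contained in any hyperplane of $\mathbb{P}^3$). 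Hoppe's criterion now gives stability.

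The main obstacle is the effectivity step: infinitely many classes satisfy the Hoppe numerical hypothesis, and reducing them to the single case $L = H$ relies on the Hodge-index identity $(L - H)^2 = (c^2 - 17 a^2)/4$ paired with the lower bound $(L - H)^2 \ge -8$ for effective divisors of small $H$-degree.
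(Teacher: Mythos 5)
Your proof is correct and follows the same overall strategy as the paper: apply the Generalised Hoppe Criterion with $s=1$ and use the long exact sequence coming from the defining monad to reduce the vanishing $H^0(K\otimes L)=0$ to statements about $H^0(L-H)$. The two places where you diverge are both improvements in self-containedness. First, the paper disposes of the single remaining case $L=H$ by a \texttt{Macaulay2} computation, whereas you observe directly that $H^0(K(1))$ is the kernel of $(c_1,c_2,c_3)\mapsto c_1x+c_2y+c_3w$ on constants, which vanishes because $x,y,w$ restrict to linearly independent sections of $\mathcal{O}_X(1)$ (the quartic $X$ lies in no hyperplane); this removes the computer dependence entirely. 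Second, the paper asserts rather tersely that the remaining twists $(k-1)H+lC$ are not effective, citing only the implication ``not effective $\Rightarrow H^0=0$''; you actually prove non-effectivity, via the Hodge-index-type identity $(L-H)^2=(c^2-17a^2)/4$ in the lattice $[4\;5\;2]$ together with the lower bounds $D^2\ge -2$ for integral curves on a K3 and the worst-case analysis ($\ge -4$ for two distinct degree-one components, $\ge -8$ for a doubled $(-2)$-curve) for effective classes of $H$-degree $1$ and $2$. I checked the identity and the congruences $a\equiv c\pmod 4$ forcing $|a|\ge 1$ (resp.\ $|a|\ge 2$) for $c=1$ (resp.\ $c=2$), and the resulting upper bounds $-4$ and $-16$ are indeed incompatible with effectivity. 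So your argument is a complete, computer-free version of the paper's proof, at the cost of the explicit lattice bookkeeping; the paper's version is shorter but leans on \texttt{Macaulay2} and leaves the effectivity step to the reader.
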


\begin{proof}

The rank of the matrix $(x,y,w)$ is equal to one everywhere on $X$, as there is no point of the form $[0:0:z:0] \in X$, so $K$ is a  vector bundle of rank $2$.

The sequence \cref{equation:def-bundle-on-K3} defines a vector bundle over $\mathbb{P}^3 \setminus \{[0:0:1:0]\}$ and \cref{proposition:real-structure} gives it a real structure.
The restriction to $X$ inherits this real structure, thus $K \rightarrow X$ is a real bundle.

To check stability, we again apply the Hoppe criterion \cref{theorem:hoppe-criterion}.

We find $c_1(K)=-3 \mathcal{O}(1) \mid_X$ and $\deg(\mathcal{O}(1)|_X)=4$ as well as $\deg(C)=5$ by \cref{lemma:small-pic-k3-explicit-lemma}.
Thus, by \cref{theorem:hoppe-criterion} we have that $K$ is stable if
\[
H^0(X, K \otimes \mathcal{O}(k) \otimes \mathcal{O}(lD))=0
\quad
\text{ for }
\quad
4k+5l \leq \frac{12}{2}=6.
\]

\begin{itemize}
    \item 
    \textbf{Computation of finitely many cohomology groups.}
    A \texttt{Macaulay2} calculation shows that
    \[
    \dim H^0(X, K \otimes \mathcal{O}(k) \otimes \mathcal{O}(lD))
    =0
    \quad
    \text{ for }
    \quad
    (k,l)=(1,0).
    \]

    \item 
    \textbf{Computation of the remaining cohomology groups.}
    Now take $(k,l)$ such that $4k+5l \leq 6$ and $(k,l) \neq (1,0)$.
    Tensoring \cref{equation:def-bundle-on-K3} by $\mathcal{O}(k) \otimes \mathcal{O}(lD)$ and passing to the long exact sequence in cohomology gives
    \begin{align}
    \label{equation:K-long-exact-sequence}
    0
    \rightarrow
    H^0(X, K \otimes \mathcal{O}(k) \otimes \mathcal{O}(lC))
    \rightarrow
    H^0(X, \mathcal{O}(k-1) \otimes \mathcal{O}(lC))^{\oplus 3}.
    \end{align}
    The assumption $4k+5l \leq 6$ and $(k,l) \neq (1,0)$ implies $k \leq 0$ or $l \leq -1$.
    Thus, the divisor $\mathcal{O}(k-1) \otimes \mathcal{O}(lC))$ is not linearly equivalent to an effective divisor, and we have $H^0(X, \mathcal{O}(k-1) \otimes \mathcal{O}(lC))=0$, see \cite[Proposition II.7.7(a)]{Hartshorne2013}.
    Therefore, \cref{equation:K-long-exact-sequence} gives $H^0(X, K \otimes \mathcal{O}(k) \otimes \mathcal{O}(lC))=0$.
    By \cref{theorem:hoppe-criterion}, the bundle $K$ is stable.
    \qedhere
\end{itemize}
\end{proof}

This proves the third part of \cref{theorem:main-theorem}.

\bibliographystyle{abbrv}
\bibliography{lib}

\appendix

\section{Alternative proofs of \texorpdfstring{\cref{proposition:K-on-P1xP1-stable}}{the stability proposition}}
\label{section:appendix-alternative-proofs}

Here, we provide two supplementary 
proofs related to \cref{proposition:K-on-P1xP1-stable}.
The first is a computer-assisted proof which is similar to the proofs of other propositions in the main body of the text and can only prove the stability of $K_n$ for fixed values of $n$.
The second is a pen-and-paper proof which has a similar structure to the computer-assisted proof but carries out the necessary arithmetic checks by hand.
The proof is written for $n=1$ but can be adapted to work for general $n$.
Since both proofs are written for $n=1$, we write $K_1=K$ to ease the notation.

\begin{proof}[Proof of stability for $n=1$ in \cref{proposition:K-on-P1xP1-stable} using computer assistance]
    We check stability of $K$ using the Hoppe criterion \cref{theorem:hoppe-criterion}.
    To this end, let first $s=1$.
    We have $c_1(K)=(-4,-4)$.
    Thus we must check $H^0(K(k,l))=0$ for all $k,l \in \mathbb{Z}$ with $k+l \leq 8/3$.
    
    We can write $K=\Im(a)$ for
    \begin{align}
    \label{equation:K-as-image}
    \begin{split}
    a&:
    \mathcal{O}(-1,-2) 
    \oplus 
    \mathcal{O}(-2,-1) 
    \oplus 
    \mathcal{O}(-1,-2) 
    \oplus 
    \mathcal{O}(-2,-1)
    \rightarrow
    \mathcal{O}(-1,-1)^{\oplus 4}
    \\
    a&=
    \begin{pmatrix}
        -y_2 & -x_2 & 0 & 0 \\
        y_1 & 0 & 0 & -x_2 \\
        0 & x_1 & -y_2 & 0 \\
        0 & 0 & y_1 & x_1
    \end{pmatrix}.
    \end{split}
    \end{align}
    The assignment of bi-graded $R_{\bullet}=\C[x_1,x_2,y_1,y_2]$-modules to coherent sheaves is an exact functor (see e.g. \cite[Proposition II.5.2a]{Hartshorne2013}), and exact functors preserve images.
    Thus, the above presentation of the sheaf $K$ corresponds to the module $\Im(a)$ for the corresponding map
    \[
    a:
    R_{\bullet+(1,2)}
    \oplus
    R_{\bullet+(2,1)}
    \oplus
    R_{\bullet+(1,2)}
    \oplus
    R_{\bullet+(2,1)}
    \rightarrow
    R_{\bullet+(1,1)}^4.
    \]
    By abuse of notation, we used the same symbol $a$ to denote the map of modules corresponding to the map of sheaves from \cref{equation:K-as-image}.
    We also used the notation $R_{\bullet+(k,l)}$ for $k,l \in \Z$ to denote the graded ring $R_{\bullet}$ with the shifted grading that makes the unit in $R_{\bullet}$ have degree $(k,l)$.
    Note that $R_{\bullet+(k,l)}$ is no longer a graded ring, but still a graded module.
    
    \begin{itemize}
        \item 
        \textbf{Computation of finitely many cohomology groups.}
        Using this commutative algebra formulation we use \texttt{Macaulay2} to compute $h^0(K(k,l))=0$  for $(k,l) \in \{(2,0),(1,1),(0,1),(1,0), (0,0), (0,2)\}$.
        
        \item 
        \textbf{Computation of the remaining cohomology groups.}
        Thus, it remains to check those values of $k,l \in \Z$ for which $k+l \leq 8/3$ and $(k,l) \notin \{(2,0),(1,1),(0,1),(1,0), (0,0), (0,2)\}$.
        In this case, $k \leq -1$ or $l \leq -1$.
        Without loss of generality assume that $k \leq -1$.
        Explicit calculation using \texttt{Macaulay2} shows for $z=[0:1]\in \mathbb{P}^1$
        \begin{align}
            \label{equation:rank3-P1P1-example-H0-restriction}
            H^0(K|_{\mathbb{P}^1 \times \{z\}}\otimes \mathcal{O}(-1))=0\; .
        \end{align}
        
        Thus
        \[
            h^0(K(k,l)|_{\mathbb{P}^1 \times \{z\}})
            =
            h^0(K|_{\mathbb{P}^1 \times \{z\}} \otimes \mathcal{O}(k))
            \leq
            h^0(K|_{\mathbb{P}^1 \times \{z\}}\otimes \mathcal{O}(-1))
            =
            0
            \text{ for }
            k,l \in \mathbb{Z}
            \text{ with }
            k \leq -1\; .
        \]
    \end{itemize}
    
    It remains to check the $s=2$ case, i.e., we must check that $h^0((\wedge^2 K)(k,l))=0$ for $k+l \leq \frac{16}{3}$.
    
    \begin{itemize}
        \item 
        \textbf{Computation of finitely many cohomology groups.}
        Explicit calculation using \texttt{Macaulay2} shows
        \begin{align}
            H^0((\wedge^2 K)(k,l))_{0 \leq k,l \leq 5}
            &=
            \begin{pmatrix}
                0&0&2&12&22&32\\
                0&0&1&8&15&22\\
               0&0& 0&4&8&12\\
                0&0&0&0&1&2\\
                0&0&0&0&0&0\\
                0&0&0&0&0&0
            \end{pmatrix}.
        \end{align}
    
        \item 
        \textbf{Computation of the remaining cohomology groups.}
        Thus, it remains to check the values of $k$ and $l$ where $k+l \leq \frac{16}{3}$ and  $k\leq -1$ or $l\leq -1$.
        Assume without loss of generality that $k \leq -1$.
        As before, an explicit calculation using \texttt{Macaulay2} shows for $z=[0:1]$:
        \begin{align}
            H^0(\wedge^2 K|_{\mathbb{P}^1 \times \{z\}}(-1))=0, 
        \end{align}
        and we deduce that $H^0((\wedge^2 K)(k,l))=0$ for $k+l \leq 16/3$ for $k,l \in \mathbb{Z}$.
    \end{itemize}
\end{proof}

\begin{proof}[Proof of stability for $n=1$ in \cref{proposition:K-on-P1xP1-stable} without computer assistance]
    Let $s:S=\P^1 \times \P^1 \rightarrow Q \subset \mathbb{P}^3$ be the Segre embedding.
    Pulling back the Euler sequence gives $K \simeq s^*(\Omega_{\mathbb{P}^3}|_Q)$.
    The conormal sequence \cite[Lemma 29.34.17]{stacks-project} of $Q$ together with $\mathcal{C}_{Q/\mathbb{P}^3} \simeq \mathcal{O}_Q(-2)$ from \cite[Lemma 31.14.2]{stacks-project} gives
    \[
    0 
    \rightarrow \mathcal{O}_Q(-2)
    \rightarrow \Omega_{\mathbb{P}^3}|_Q
    \rightarrow \Omega_Q
    \rightarrow 0.
    \]
    Pulling back under $s$ gives
    \begin{align}
    \label{equation:K-SES}
    0\rightarrow
    \mathcal{O}(-2,-2)
    \rightarrow K
    \rightarrow \mathcal{O}(-2,0) \oplus \mathcal{O}(0,-2)
    \rightarrow 0.
    \end{align}
    Let the extension class of this sequence be
    $(\alpha,\beta) \in Ext^1(\mathcal{O}(-2,0),\mathcal{O}(-2,-2)) \oplus Ext^1(\mathcal{O}(0,-2),\mathcal{O}(-2,-2)) = Ext^1(\mathcal{O}(-2,0) \oplus \mathcal{O}(0,-2),\mathcal{O}(-2,-2))$ (we used \cite[Prop. 3.3.4]{weibel1994introduction} for the Ext groups of a sum).
    
    Claim:
    $\alpha \neq 0$ and $\beta \neq 0$.
    
    Proof of claim:
    Let $F=\P^1 \times \{p\}$ for some $p \in \P^1$.
    Then $L=s(F)$ is a line in $\P^3$, so
    \begin{align}
    \label{equation:K-line-splitting-type}
    K|_F
    \simeq
    \Omega_{\P^3}|_{L}
    \simeq
    \mathcal{O}_{\P^1}(-2) \oplus \mathcal{O}_{\P^1}(-1)^{\oplus 2},
    \end{align}
    where in the second step we used the conormal sequence for $L \subset \P^3$ and got
    \[
    0
    \rightarrow I_L/I_L^2 \simeq \mathcal{O}_L(-1)^{\oplus 2}
    \rightarrow \Omega_{\P^3}|_L
    \rightarrow \mathcal{O}(-2)
    \rightarrow 0,
    \]
    which is a split extension due to $Ext^1(\mathcal{O}(-2),\mathcal{O}(-1)^{\oplus 2})=H^1(\mathcal{O}(1))^{\oplus 2}=0$.
    Restricting \cref{equation:K-SES} to $F$ gives
    \[
    0 \rightarrow
    \mathcal{O}(-2) \rightarrow
    K|_F \rightarrow
    \mathcal{O} \oplus \mathcal{O}(-2)
    \rightarrow 0.
    \]
    If $\beta=0$, then this splits and gives a trivial summand in $K|_F$, which is a contradiction to \cref{equation:K-line-splitting-type}.
    Hence, $\beta \neq 0$, and analogously one finds $\alpha \neq 0$.
    (End of proof of claim.)
    
    We have $c_1(K)=(-4,-4)$ and must rule out destabilising subsheaves $F \subset K$ with $\mu_H(F) \geq \mu_H(K)=-\frac{8}{3}$.
    By \cite[Prop. 1.2.6]{Huybrechts2010} it suffices to check $F$ so that $K/F$ is torsion-free.
    By \cite[Lemma 31.12.7]{stacks-project}, $F$ is reflexive.
    Because $S$ is smooth of dimension $2$, by \cite[Lemma 31.12.15]{stacks-project}, $F$ is locally free.
    
    First check $F$ with $rk(F)=1$, i.e. $F \simeq \mathcal{O}(a,b)$.
    Since $F \hookrightarrow \mathcal{O}(-1,-1)^{\oplus 4}$ we have $a \leq -1$, $b \leq -1$.
    On the other hand, $a+b=\mu_H(F) \geq -\frac{8}{3}$, so $a+b \geq -2$.
    Hence, $(a,b)=(-1,-1)$.
    But twisting the defining sequence of $K$ by $\mathcal{O}(1,1)$ gives
    \[
    0 \rightarrow K(1,1) \rightarrow \mathcal{O}^{\oplus 4} \rightarrow \mathcal{O}(1,1) \rightarrow 0,
    \]
    where the map $\mathcal{O}^{\oplus 4} \rightarrow \mathcal{O}(1,1), (a,b,c,d) \mapsto a \cdot x_1 y_1+b \cdot x_1 y_2+c \cdot x_2 y_1+d \cdot x_2 y_2$ induces an isomorphism $H^0(\mathcal{O}^{\oplus 4}) \rightarrow H^0(\mathcal{O}(1,1))$.
    Hence, from the long exact sequence in cohomology, we get $H^0(K(1,1))=0$ and therefore $\Hom(\mathcal{O}(-1,-1),K) \simeq H^0(K \otimes \mathcal{O}(1,1)) = H^0(K(1,1))=0$.
    Thus, $F$ cannot inject into $K$ and we checked the rank $1$ case.
    
    Now assume $rk(F)=2$ and write $\det F=\mathcal{O}(a,b)$.
    As before, $F$ is locally free, so $F \rightarrow K \rightarrow \mathcal{O}(-1,-1)^{\oplus 4}$ induces an injection $\det F=\Lambda^2 F \rightarrow \Lambda^2(\mathcal{O}(-1,-1)^{\oplus 4}) \simeq \mathcal{O}(-2,-2)^{\oplus 6}$, hence $a \leq -2$, $b \leq -2$.
    From $\frac{a+b}{2} = \mu_H(F) \geq -\frac{8}{3}$ we have $a+b \geq -5$, so $(a,b) \in \{(-2,-2),(-2,-3),(-3,-2)\}$.
    It follows that it suffices to prove $H^0(\Lambda^2 K(2,2))=H^0(\Lambda^2 K(2,3))=H^0(\Lambda^2 K(3,2))=0$.
    
    We have $\Lambda^2 K \simeq K^* \otimes \det K \simeq K^* (-4,-4)$, because $\det K \simeq \det (\mathcal{O}(-1,-1)^{\oplus 4}) \otimes \det (\mathcal{O})^{-1} \simeq \mathcal{O}(-4,-4)$ from the defining sequence of $K$.
    Thus, the three $H^0$ vanishing conditions become:
    $H^0(K^*(-2,-2))=H^0(K^*(-2,-1))=H^0(K^*(-1,-2))=0$.
    The dual of \cref{equation:K-SES} twisted by $\mathcal{O}(-2,-2)$ is
    \[
    0 \rightarrow \mathcal{O}(0,-2) \oplus \mathcal{O}(-2,0) \rightarrow K^*(-2,-2) \rightarrow \mathcal{O} \rightarrow 0.
    \]
    In the long exact sequence in cohomology, the left hand side has no global sections, and the connecting morphism $H^0(\mathcal{O}) \rightarrow H^1(\mathcal{O}(0,-2)) \oplus H^1(\mathcal{O}(-2,0))$ maps $1 \mapsto e'$, where $e'$ is the extension class of the dual twisted sequence by \cite[Lemma 22.2]{Swanson2018}.
    Non-splitting is preserved under twists and taking duals, so it is non-zero, because the extension class of \cref{equation:K-SES} is non-zero, which was shown above.
    Thus, $H^0(K^*(-2,-2))=0$.
    
    To see that $H^0(K^*(-2,-1))=0$ note that by Serre duality:
    \[
    H^0(K^*(-2,-1))^*
    \simeq
    H^2((K(2,1) \otimes \mathcal{O}(-2,-2))
    =
    H^2(K(0,-1)).
    \]
    By the Künneth formula:
    \begin{align*}
    H^1(\mathcal{O}(0,-1))
    &=
    H^0(\mathcal{O}) \otimes H^1(\mathcal{O}(-1))
    \oplus
    H^1(\mathcal{O}) \otimes H^0(\mathcal{O}(-1))
    =0,
    \\
    H^2(\mathcal{O}(-1,-2))
    &=
    H^1(\mathcal{O}(-1)) \otimes H^1(\mathcal{O}(-2))=0.
    \end{align*}
    Twisting the defining sequence for $K$ by $\mathcal{O}(0,-1)$ and taking the long exact sequence in cohomology we get
    \[
    \dots 
    \rightarrow
    H^1(\mathcal{O}(0,-1))
    \rightarrow
    H^2(K(0,-1))
    \rightarrow
    H^2(\mathcal{O}(-1,-2)^{\oplus 4})
    \rightarrow \dots
    \]
    Hence $H^2(K(0,-1))=0$, which proves $H^0(K^*(-2,-1))=0$.
    Analogously, one checks $H^0(K^*(-1,-2))=0$, which finishes the proof.
\end{proof}

\noindent                               

\vskip 8pt

\noindent{\small\sc Dipartimento di Matematica e Applicazioni `Renato Caccioppoli',  Università degli studi di Napoli `Federico II', Via Cintia, Monte S. Angelo I-80126 Napoli, Italy}

\noindent E-mail: {\tt \href{mailto:dino.festi@unina.it}{dino.festi@unina.it}}

\vskip 8pt

\noindent{\small\sc Imperial College London, Department of Mathematics, 180 Queen's Gate, South Kensington, London SW7 2RH, the United Kingdom}

\noindent E-mail: {\tt \href{mailto:d.platt@imperial.ac.uk}{d.platt@imperial.ac.uk}}

\vskip 8pt

\noindent{\small\sc University of Münster, Einsteinstraße 62, 48149 Münster, Germany}

\noindent E-mail: {\tt \href{mailto:rsinghal@uni-muenster.de}{rsinghal@uni-muenster.de}}

\vskip 8pt

\noindent{\small\sc Beijing Institute of Mathematical Sciences and Applications (BIMSA), No. 544, Hefangkou Village, Huaibei Town, Huairou District, Beijing 101408, China}

\noindent E-mail: {\tt \href{mailto:ytanaka@bimsa.cn}{ytanaka@bimsa.cn}}

\end{document}